\documentclass[12pt]{amsart}
\makeatletter
\def\ps@plain{%
\def\@oddfoot{\hfil\thepage\hfil}%
\def\@evenfoot{\hfil\thepage\hfil}%
\def\@oddhead{}%
\def\@evenhead{}%
}

\makeatother
\usepackage[utf8]{inputenc}
\usepackage{amssymb}
\usepackage{amsmath}
\usepackage{bbm}
\usepackage{float}
\usepackage{graphicx}
\usepackage{subcaption}
\usepackage{tikz-cd}
\usepackage{amsthm}
\usepackage[colorlinks=true, allcolors=blue]{hyperref}
\usepackage[letterpaper,top=2cm,bottom=2cm,left=3cm,right=3cm,marginparwidth=1.75cm]{geometry}
\usepackage{enumitem}   
\usepackage{cite}
\usepackage{booktabs}

\newtheorem{theorem}{Theorem}[section]

\newtheorem{definition}[theorem]{Definition}
\newtheorem{lemma}[theorem]{Lemma}

\newtheorem{proposition}[theorem]{Proposition}
\newtheorem*{prop}{Proposition}

\newtheorem{remark}[theorem]{Remark}

\DeclareMathOperator{\sign}{sign}

\numberwithin{theorem}{section}

\newcommand{\Pb}{\mathbb{P}}

\newcommand{\R}{\mathbb{R}}

\newcommand{\bs}{\bigskip}
\newcommand{\ms}{\medskip}
\newcommand{\dd}{\mathrm{d}}

\newcommand{\1}{\mathbbm{1}}

\newcommand{\F}{\mathcal{F}}

\newcommand{\Jac}{\operatorname{Jacobi}}

\title{A one-parameter family of local-time conditioned processes : the generalized Brownian burglars}
\author{Arthur Dremaux}
\address{DPMMS, University of Cambridge}
\email {ad2231@cam.ac.uk}
\date{}

\begin{document}

\begin{abstract}
We construct and characterize the one-parameter family of real-valued processes that allow to recover the law of a Brownian loop-soup of any intensity on the real line conditionally on its occupation time field. These processes generalize the Brownian burglar constructed by Warren and Yor (that corresponds to our process in the limiting case where the intensity vanishes and there is just one Brownian motion). These processes are closely related to the Bass-Burdzy flow and recent work of A\"id\'ekon, Hu and Shi on the stochastic Jacobi flow.

Our approach uses a formalism, building on the notion of driver processes, where one considers the evolving object to be the line that we ``stretch", instead of the burglar itself and its remaining occupation time.
This leads to a characterization of these (generalized) Brownian burglars by simple natural axioms. It also allows to treat ``negative intensities'' and provides fairly direct derivations of several properties of the burglars, and in particular a ``target-independence/locality" property at the special intensity where the local time is the square of a Gaussian Free Field.
\end{abstract}

\maketitle
\section{Introduction}

Let $B$ be a real-valued Brownian motion defined on a time-interval $[0,T]$. One of its important properties is the existence and bicontinuity of Paul L\'evy's local time process $L_B(t,x)$ verifying the occupation-time formula for any bounded Borel function $f$ (see for instance the textbooks \cite{levy1992processus,revuzyor,mp2010}) $$\int_0^t f(B_s)\dd s=\int_\R f(r)L_B(t,r)\dd r.$$

The Brownian loop-soup $\mathcal L$ of intensity $\nu \ge 0$ on some interval of the line is a Poisson point process of Brownian loops, with intensity $\nu$ times the natural Brownian loop measure.
In any fixed bounded interval, it contains almost surely infinitely many loops, but only finitely many with a diameter greater than any fixed $\varepsilon$.
These loop-soups have been first defined and studied in the continuum space by Lawler and Werner \cite {MR2045953} and then on transient graphs and cable-graphs by Le Jan and Lupu \cite {LeJan2011,Titus_Lupu_2018}. In our case, where the cable-graph is a proper subset of $\R$, the loop-soup can also be implicitly detected in the background of some of the sample path decompositions of Brownian paths in works by Pitman and/or Yor.
One can also define the occupation time profile of each of these loops, and by summing all these contributions, one obtains the occupation time profile $\Lambda$ of the whole collection of Brownian loops, which turns out to be also almost surely finite and continuous.
Then, on e.g.\ the interval $(0, \infty)$ (see \cite{Titus_Lupu_2018}), one has the loop-soup analog to the classical Ray-Knight Theorems: the local-time profile $\Lambda$ is a squared Bessel process of dimension $2\nu$ -- the additivity property of the squared Bessel processes corresponds to the additivity property of loop-soups (the union of two independent loop-soups is another loop-soup of intensity $\nu + \nu'$), and the local time of each individual loop can be viewed via the decomposition of the squared Bessel process as a Poisson point process of squared Bessel excursions of dimension $0$.
At the critical intensity $\nu=1/2$, it has the law of a squared Brownian motion on the half-line (which can be viewed as a squared GFF) as stated by ``Le Jan's isomorphism" \cite{LeJan2011}.

So, from a Brownian loop-soup with intensity $\nu$, one can construct its total occupation time $\Lambda$, which turns out to be a squared Bessel process of dimension $2\nu$. This leads naturally to the reverse question of how to describe the law of the loop-soup, when it is conditioned by the total local time profile $\Lambda$, which is the main theme of the present paper.

A very closely related question (as we shall explain, it corresponds to the limiting case where $\nu \to 0+$) was first studied and answered by Warren and Yor in \cite{SPS_1998__32__328_0}, where they defined the process (that they called the Brownian burglar process) with the law of one Brownian motion (up to some stopping time say) conditioned by its occupation time $\Lambda$. Note that this process has the same almost sure pathwise properties as usual Brownian motion (e.g.\ quadratic variation etc), but (see our previous work \cite{Dremaux2026}) it turns out that its law on any time-interval is actually singular with respect to the law of the usual Brownian motion.
Aspects of this general question have been treated in a number of papers including \cite {AHS,Lupu_2019,Lupu_2021}, sometimes with somewhat different motivations, goals and techniques -- and we will describe some of them in the next few paragraphs.

\ms

One main contribution of the present paper is arguably the formalism itself that we use to tackle this question (even if it of course relates to earlier work as we will explain) that we now briefly discuss:

As a warm-up, let us first make the following simple observation:  if $g$ is an increasing diffeomorphism between two open sub-intervals $I$ and $J$ of $\R$, and if a Brownian motion $B$ takes values in $I$ during the interval $[0,T]$, then the corresponding process $X = g(B)$ has increasing quadratic variation, and a bicontinuous local time process $L_X$ verifying the occupation-time formula $$\int_0^t f(X_s)\dd[X]_s=\int_J f(r)L_X(t,r)\dd r,$$
and this local-time process $L_X$ is simply described by the ``stretching" formula
$$ L_X(t,g(r)) = g'(r)L_B(t,r).$$
Note that here the convention to ``define'' the local time of a process is not that of its occupation-time density, but rather indexed by its quadratic variation, which can be considered to be intrinsic to the geometric path of the process, so that the local-time profile of a process $X$ at final time $T$ will actually be independent of the chosen time-parametrization. We will stick with this important convention until the end of the paper, so that all the processes we will be dealing with really have to be understood as geometrical paths, and \textbf{will always implicitly be defined up to a time-reparametrization}.

With this point of view, the diffeomorphism $g$ is a geometric transformation that stretches both the space-scale and the local-time by a local factor $g'(r)$, so that it preserves the local resistance metric $\dd r / L(r)$. For the rest of the article, we will usually keep this geometric point of view and call such transformations \textbf{stretching morphisms}. 
They act in the same way on the whole loop-soup, and stretch it to a collection of loops with total local-time profile $\tilde \Lambda(g(r)) = g'(r)\Lambda(r)$. Notice that the obtained collection of loops can be very singular to a usual loop-soup, since e.g.\ there exists a (unique up to translation) stretching $g$ mapping the loops of $\mathcal L$ in the cluster of $x\in \R$ to a collection of loops with local time profile constant equal to 1 on the whole of $\R$. We will call all such local time profiles attainable by such stretchings \textbf{admissible capacities}.

\ms

The starting point to our work is the following important remark, first noticed by Warren and Yor in \cite{SPS_1998__32__328_0} for the case of the usual Brownian burglar, and also adapted to general positive $\nu$ by A\"id\'ekon, Hu and Shi in \cite{AHS}. The law of the Brownian burglar processes exhibits a \textbf{stretching covariance}:

\begin{prop}
Let $\mathcal L$ be a Brownian loop-soup of intensity $\nu\geq 0$ in $\R\setminus\{x_0\}$, and let $B$ be a usual Brownian motion started from $x$ and ran up to the stopping time $\tau$ at which it reaches local time $a>0$ at $x_0$. Let $\Lambda$ be total local-time profile of the union of $\mathcal L$ and $B$. Let $g$ be the unique stretching morphism sending the point $x$ to 0 and mapping (the capacity) $\Lambda$ to the constant capacity $\rho=1$ on $\R$, i.e.\ $g : z \mapsto \int_x^z \frac{\dd r}{\Lambda(r)}$.
Then, the law of the stretched process $g(B)$ (defined up to time-reparametrization) is independent of $\Lambda$ (and of $a$) conditionally on $g(x_0)$.
\end{prop}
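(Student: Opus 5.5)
The plan is to compute the conditional law of $B$ given $\Lambda$ through the Ray--Knight description of $\Lambda$ and then to check that the resulting dynamics, read in the coordinate $y=g(\cdot)$, involve $\Lambda$ only through $g(x_0)$.

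\emph{Step 1: joint law of the occupation field.} By the strong Markov property, the pair (path of $B$ up to $\tau$, loop-soup $\mathcal L$) is a Markov structure. By the generalized Ray--Knight theorems (the loop-soup version of Le Jan--Lupu in \cite{Titus_Lupu_2018}, combined with the classical Ray--Knight theorem for $B$ stopped at $\tau$), $\Lambda$ is described piecewise as squared Bessel processes in the space variable:
\begin{itemize}[label=$\cdot$]
\item dimension $2\nu$ away from $[x\wedge x_0,x\vee x_0]$, started from $a$ at $x_0$ (and absorbed/decaying outward);
\item dimension $2\nu+2$ between $x$ and $x_0$, corresponding to the extra crossing of $B$.
\end{itemize}
The Markov structure in $r$ is what gives the explicit densities.

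\emph{Step 2: the conditioned dynamics of the burglar.} I will describe the law of $B$ given $\Lambda$ as a Markov process on the state (current position $B_t$, remaining profile $\Lambda-L_B(t,\cdot)$). Its infinitesimal behaviour is obtained by an $h$-transform: the drift of the conditioned process at position $z$ is the logarithmic derivative of the density of the remaining profile with respect to the squared-Bessel reference law. For such squared-Bessel laws, the density ratio between dimensions $\delta$ and $\delta'$ is given by Girsanov-type functionals of the form $\exp\bigl(c\int \dd r/\Lambda(r)\bigr)$ times powers of $\Lambda$ at endpoints.

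\emph{Step 3: change of coordinates, where stretching covariance enters.} The key point is that under $g(z)=\int_x^z \dd r/\Lambda(r)$:
\begin{itemize}[label=$\cdot$]
\item all these functionals become functions of resistance distances only, i.e.\ of $g$-differences;
\item the squared-Bessel transition of the remaining profile, written in resistance coordinates, becomes a time-change of a fixed diffusion.
\end{itemize}
Concretely, I will show that after stretching, the remaining capacity $\rho_t=1-\tilde L_t$ on $\R$ evolves by a rule depending only on the stretched position of $x_0$. The remaining profile at any time is itself admissible: it is again of loop-soup-plus-path form, by the Markov property. So one can iterate and restretch, which is a self-similar consistency.

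The total $a$ drops out because in $g$-coordinates the stretching also normalizes the capacity at $x_0$. What matters is then only the resistance $g(x_0)$ between start and target, so the law of $g(B)$ depends only on $g(x_0)$.

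\emph{Main obstacle.} The hard part is Step 2–3: making rigorous that conditioning on a full continuous profile yields an $h$-transform whose drift in stretched coordinates is scale-free. The issue is the singular conditioning (zero-probability events) together with the infinitely many small loops.

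I would handle this by discretization. First condition on the local times at finitely many points $r_1<\dots<r_n$, where the Markov property of squared Bessel processes gives explicit Bessel-bridge densities. Then apply the elementary invariance: a stretching $g$ restricted to each cell maps squared-Bessel bridges of given endpoint values and resistance length to the same object. This holds by the scaling and time-change property $\mathrm{BESQ}(\delta)$ $\mapsto$ a time change under $\dd r/\Lambda$, i.e.\ the Lamperti-type representation. Finally, refine the mesh and pass to the limit by martingale convergence.
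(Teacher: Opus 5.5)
Your plan has a genuine gap: Steps~2--3 restate the conclusion rather than prove it, and the proposed discretization cannot supply the missing argument.

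The Bayes/$h$-transform formula needs the density, with respect to the law of $\Lambda$, of the conditional law of $L_B(t,\cdot)+\lambda_t$ given $\F_t$. This is not the dimension-change Girsanov density you quote, and in general it does not exist. Already for $\nu=0$ the conditioned law is singular with respect to Wiener measure on every time interval, as recalled in the introduction, so there is no Doob $h$-function.

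Carried out formally, your recipe gives, for $z<x_0$, $h(z)=\bigl(\lambda_t(z)/\lambda_t(x_0)\bigr)^{1/2}\exp\bigl((\tfrac12-\nu)\int_z^{x_0}\dd r/\lambda_t(r)\bigr)$. Its logarithmic derivative is exactly the singular drift of the heuristic SDE in the introduction. Contrary to Step~3, this is not a function of resistances. Since $\lambda_t(z)=\Lambda(z)\bigl(1-L_{g(B)}(t,g(z))\bigr)$, the factor $\lambda_t(z)^{1/2}$ contributes $\tfrac12\partial_z\Lambda/\Lambda^2$ to the drift of $g(B)$. The $\Lambda$-dependence then disappears only through a formal cancellation against the It\^o correction $\tfrac12 g''=-\tfrac12\partial_z\Lambda/\Lambda^2$ of the merely $\mathcal C^1$ map $g$, that is, between two terms that have no meaning.

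Discretizing does not repair this, for three reasons. First, conditioning on $\Lambda(r_1),\dots,\Lambda(r_n)$ does not determine $g$, so there is no finite-dimensional statement about $g(B)$ to pass to the limit. Second, the invariance you invoke for squared Bessel bridges under the Lamperti change of variable concerns the profile alone. It says nothing about how the path $B$ distributes its local time inside a cell. Third, martingale convergence only identifies the limit as the conditional law given $\Lambda$; it does not produce any invariance. A discrete route would need an invariance statement for the conditioned trace of $B$ on the grid, together with convergence of these traces and of the discrete resistances to $g(B)$ and $g$. None of this is in the proposal.

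The missing ingredient is a mechanism that makes the stretched \emph{path}, not just the profile, independent of $\Lambda$. The paper does not rederive the statement. It attributes it to Warren--Yor \cite{SPS_1998__32__328_0} and to \cite{AHS}. In the proof of Theorem~\ref{thm:disintegration} it uses Proposition~5.4 of \cite{AHS} for a boundary target, then reaches an interior target $x_0$ through side-locality, obtained by Brownian rewiring.

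Behind the cited result is a process-level beta--gamma algebra. For independent squared Bessel processes $Z_1,Z_2$, the ratio $Z_1/(Z_1+Z_2)$, read in the time $\int\dd r/(Z_1+Z_2)$, is a Jacobi diffusion independent of $Z_1+Z_2$. By Ray--Knight at the stopping times $\tau^r_b$, the consumed part $L_B(\tau^r_b,\cdot)$ and the remainder are independent squared-Bessel-type fields. Moreover, the consumed fraction $L_B(t,z)/\Lambda(z)$ is exactly the local time of $g(B)$ at $g(z)$. So in the coordinate $u=g(z)$, the local-time flow of $g(B)$ is described by a Jacobi flow independent of $\Lambda$, with parameters changing only at $g(x)=0$ and $g(x_0)$, and $g(B)$ is recovered from that flow. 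An independence statement of this type, for the path and not just for the profile, is what your argument needs.
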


We will call the Brownian motion $B$ conditioned by the total occupation time profile $\Lambda$ the \textbf{(generalized) Brownian burglar of intensity $\nu$}, initial capacity $\Lambda$, started from $x$ and targeting $x_0$.
This proposition can immediately be extended to any other fixed admissible capacity $\rho$ than the constant capacity 1, and thus gives a natural way of properly defining the law of this general burglar for any admissible local time profile $\rho$, and the stretching covariance then translates in the following way :

\begin{prop}[Stretching covariance of the burglars]
Let $\Lambda$ be an admissible capacity on an interval $I$, and let $x,x_0 \in \overline{I}$. Let $X$ be a Brownian burglar of intensity $\nu \in\R$ and initial capacity $\Lambda$, starting from $x$ and targeting $x_0$. Let $g$ be a stretching morphism mapping $\Lambda$ to an other admissible capacity $\rho$ on some interval $J$.
Then, the process $g(X)$ has (up to time-reparametrization) the law of a Brownian burglar of intensity $\nu$ and initial capacity $\rho$, starting from $g(x)$ and targeting $g(x_0)$.
\end{prop}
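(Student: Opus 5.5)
The plan is to reduce everything to the first Proposition through the definition of the burglar for a general admissible capacity, and then use that stretching morphisms form a groupoid acting on capacities. The intended definition reads as follows. Fix the reference capacity $\rho_0\equiv 1$ on $\R$. For an admissible capacity $\Lambda$ on $I$ there is a stretching morphism $h_\Lambda$ sending $\Lambda$ to $\rho_0$ restricted to $h_\Lambda(I)$. Concretely, $h_\Lambda(z)=\int_{c}^z \dd r/\Lambda(r)$, which is unique up to a translation. The burglar with capacity $\Lambda$ from $x$ targeting $x_0$ is then $h_\Lambda^{-1}(Y)$, where $Y$ is the burglar with constant capacity from $h_\Lambda(x)$ targeting $h_\Lambda(x_0)$. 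By the first Proposition, the law of $Y$ modulo time-reparametrization depends only on the pair $(h_\Lambda(x),h_\Lambda(x_0))$, or equivalently on $h_\Lambda(x_0)-h_\Lambda(x)$, by translation invariance of the constant-capacity burglar.

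The first step is to check that this definition is consistent. The choice of base point $c$ changes $h_\Lambda$ by a translation $\tau$ of $\R$. The constant-capacity burglar law is translation covariant, since translating a loop-soup on $\R$ together with $B$ preserves everything; this also follows from the first Proposition, because $g$ is normalised by $g(x)=0$. So the definition does not depend on $c$.

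Next come the algebraic facts. If $g:I\to J$ is a stretching morphism with $g'\Lambda\circ g^{-1}=\rho$, then $h_\rho\circ g$ is a stretching morphism sending $\Lambda$ to the constant capacity. This follows from the chain rule for the pushforward
$$(g_*\Lambda)(g(r))=g'(r)\Lambda(r),$$
which gives $(h_\rho\circ g)'=g'/\rho\circ g=1/\Lambda$. Hence $h_\rho\circ g=\tau\circ h_\Lambda$ for some translation $\tau$. It follows that
$$g(X)=g\big(h_\Lambda^{-1}(Y)\big)=h_\rho^{-1}\big(\tau(Y)\big).$$
Here $\tau(Y)$ is a constant-capacity burglar from $\tau h_\Lambda(x)=h_\rho(g(x))$ targeting $h_\rho(g(x_0))$, so by definition $g(X)$ is the burglar with capacity $\rho$ from $g(x)$ targeting $g(x_0)$. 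Applying a deterministic monotone map to a path commutes with time-reparametrization, so equality in law of unparametrized paths is preserved.

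The main obstacle is making sure this definition agrees with the genuine conditional law whenever $\Lambda$ arises as an actual occupation profile. It must also handle boundary cases ($x,x_0\in\overline I\setminus I$, or $h_\Lambda(I)$ a proper or half-infinite subinterval of $\R$) and intensities $\nu\in\R$, including negative ones. For the conditional-law identification I would argue as follows. The first Proposition says $g(B)$ is independent of $\Lambda$ given $g(x_0)$. Disintegrating, this gives a version of the conditional law of $B$ given $\Lambda$ of exactly the form $h_\Lambda^{-1}(Y)$. For subintervals I would restrict the constant-capacity object to $h_\Lambda(I)$, which contains the range because the burglar's local time must fit inside the capacity. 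For negative $\nu$, and for capacities not arising from loop-soups, the statement becomes a tautology of the definition via $h_\Lambda$. So the only real content is the consistency of the normalisation, which I expect to be the most delicate bookkeeping point.
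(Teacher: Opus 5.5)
Your composition computation is correct: $(h_\rho\circ g)'=1/\Lambda$, so $h_\rho\circ g=\tau\circ h_\Lambda$ and $g\circ h_\Lambda^{-1}=h_\rho^{-1}\circ\tau$. For $\nu\ge 0$, under the introduction's informal definition (extend the conditioned Brownian motion from the uniform capacity by stretching), this is exactly the near-tautology the paper alludes to there. The gap is that the statement concerns the burglar as the paper actually defines it (Definition \ref{def:general_burglar}), for every $\nu\in\R$. There, $X_t=f_t^{-1}(D_t)$, where $D$ is the burglar driver started at $f(x)$ and targeting $f(x_0)$, and $(f_t)$ is the \emph{time-dependent} stretching flow solving (\ref{floweq2}) with $f_0=f$. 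Under that definition you cannot simply posit $X=h_\Lambda^{-1}(Y)$ with $Y$ the uniform-capacity burglar: that identity is the whole content of the proposition, in the case $\rho\equiv 1$. Moreover, for $\nu<0$ there is no loop-soup, so your $Y$ is not defined at all. Calling the statement a tautology there presupposes a translation-covariant uniform-capacity burglar that your argument never constructs. In the paper this object comes from the driver SDE (\ref{eq:uniform-driver}), which makes sense for every $\nu$ and is translation invariant.

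The missing step is Proposition \ref{transfprocess}. Let $h$ uniformize $\rho$ and set $f=h\circ g$, which uniformizes $\Lambda$. In (\ref{floweq2}), written for $(f_t)$, the space variable enters only through the initial value $f(x)$ and the path $s\mapsto f_s(x)$. Hence $k_t:=f_t\circ g^{-1}$ satisfies
$$k_t(y)=h(y)+\int_0^t\1_{(y_0,k_s(y))}(D_s)\dd[D]_s,\qquad y_0=f(x_0)=h(g(x_0)).$$
By uniqueness of solutions (Lemma \ref{lemmatech}(i), Proposition \ref{characterization}), $k_t$ is the stretching flow of $g(X)$ for the uniformization $h$ and target $g(x_0)$. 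So $g(X_t)=k_t^{-1}(D_t)$ is driven by the same $D$, which starts at $h(g(x))$ and targets $h(g(x_0))$. By definition, $g(X)$ is therefore the burglar on $\rho$ from $g(x)$ to $g(x_0)$; the choice of $h$ is irrelevant by translation invariance of (\ref{eq:uniform-driver}). With this in hand, your groupoid argument goes through verbatim.

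Two side remarks. First, admissibility (infinite resistance on both sides) forces $h_\Lambda(I)=\R$, so no proper or half-infinite subinterval case arises. Second, matching the burglar with the conditional law of $B$ given $\Lambda$ is the separate disintegration theorem and is not needed for covariance.
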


It may be useful here to draw a parallel with Schramm--Loewner Evolutions. The one-parameter family of SLE$_\kappa$ curves is uniquely characterized by their conformal covariance (up to time-reparametrization), in addition to natural properties for such curves (domain Markov property, and symmetry). We will indeed formulate a counterpart for the Brownian burglars, in which stretching covariance plays the same role as conformal invariance for SLEs.
Namely, our Theorem \ref{thm:classification} will state that {\em this stretching covariance property, in addition to some natural symmetry properties and an analogue of the domain Markov property, characterize the family of laws of the burglars up to the real parameter $\nu$}. In particular, this also defines a family of space-filling -- or rather ``local-time filling" -- burglar processes for every negative intensities $\nu \leq 0$.

This analogy goes somewhat further and suggests using a similar construction of the burglar processes to the SLE one. We will indeed give a formal construction of the burglars for any real parameter $\nu\in\R$, any initial capacity profile and up to full-time via a driver process and a continuous flow of stretching maps.

In short, the idea is the following : the law of a burglar $X$ after time $t$ is the law of an independent burglar in the remaining capacity $\lambda_t = \lambda - L_X(t,\cdot)$, started from $X_t$ -- this is the \textbf{local-time Markov property}. By the stretching covariance, if we stretch via a map $g_t$ the remaining capacity $\lambda_t$ to the constant capacity 1, the law of $\left(g_0^{-1}(g_t(X_{t+s}))\right)_{s\ge 0}$ is that of a burglar in the original capacity $\lambda$, started from $g_0^{-1}(g_t(X_t))$ and independent from the burglar in $[0,t]$. Thus, in some sense, if we continually stretch the capacity $\lambda_t$ back to the constant capacity 1, the corresponding process -- called the \textbf{driver process} -- $(D_t)_{t\geq 0}:=(g_t(X_t))_{t\geq 0}$ will be a Markovian diffusion. Finally, we actually show that such a driver process uniquely determines the initial process $X$ on the capacity $\lambda$, and thus we define the burglars via their drivers.

\ms

Heuristically, if we formally apply Ito's formula to the burglars $X$ via their driver $D$ (which are semimartingales) and the $\mathcal C^1$ stretching maps $g_t$, they are (up to time reparametrization) solution to the singular SDE :
\begin{equation}
\label{heursde}
\dd X_t = \dd B_t + \frac{1}{2}\frac{``\partial_x \lambda_t(X_t)"}{\lambda_t(X_t)}\dd t + \frac{1/2 - \nu}{\lambda_t(X_t)} \text{sign}(X_t - x_0) \dd t.
\end{equation}
Even though $\lambda_t(\cdot)$ is usually not differentiable, stretching back this SDE to the constant capacity $1$ precisely via the stretchings $g_t$ allows us to understand this term. It is usual to make sense of $\partial_x\lambda$ when $\lambda$ is non-differentiable (but constant in $t$) via a scale-change (see \cite{revuzyor}). Here the same has to be understood for the term $\partial_x\lambda_t$, but via the time-dependent change of scale $g_t$.
The same kind of SDE already appears in \cite{Lupu_2019,Lupu_2021,Alberts_2013}, and it should be the scaling limit of the jump processes defined in \cite{10.1214/24-EJP1176} to invert Ray-Knight Theorems on trees.

Analyzing this heuristic SDE give us a good first idea of how the burglars should behave. First, when the burglar $X$ is at some point $x$, it only ``sees" two things :
the remaining capacity / local time profile $\lambda_t$ around $x$ (in a neighborhood as small as we want), and whether its target $x_0$ is on the left or on the right side of him.
Indeed, $X$ is always pushed towards places with more local time available (via the singular term), and depending on the intensity $\nu$ it also gets an additional drift term attracting it towards (if $\nu>1/2$) or pushing it away (if $\nu<1/2$) from the target $x_0$. We call this property \textbf{side-locality}. At the critical intensity $\nu=1/2$, this drift term disappears, suggesting a particularly nice ``target-independence" behaviour (analogous to that of SLE$_6$) : burglars with different targets $x$ and $y$ can be coupled until the disconnection time of $x$ and $y$ in $\lambda_t$. This result is partly present in the article \cite{Lupu_2021}, in which it is shown that this diffusion is the scaling limit of the so-called "Reversed Vertex Reinforced Jump Process", which is already target-independent in the discrete setup. This echoes some work at the critical intensity, in particular \cite{aidekon2020clusterexplorationsloopsoup} and the recent ``switching property" of the critical Brownian loop-soup derived in \cite{werner2025switchingidentitycablegraphloop}. In fact (see \cite {ADW}), one can use the present construction in order to derive the explicit switching-bijection at the level of loop-soups of critical intensity on general cable-graphs via ``peeling'', as outlined in Section 4.4 of \cite {werner2025switchingidentitycablegraphloop}.

Finally, some computations and the similarity with the process studied in \cite{Rogers01121994} suggest that the drift of the solution $X$ (our burglar) actually has non-trivial 4/3-variation of the form $\gamma \int_0^t \frac{\dd s}{\lambda_s(X_s)^{2/3}}$ for some constant $\gamma$ (which in particular implies that the burglars are not semi-martingales). We plan to address this question in
\cite {ADb}.

\ms 

One benefit of our framework is that it is direct, well-defined and  quite tractable. It provides a unified (and full-time) construction of the processes discussed in \cite{SPS_1998__32__328_0,MR1650567,Lupu_2021,AHS}, and the aforementioned analogy with SLEs leads to a number of burglar-analogs of many interesting objects/properties/features constructed/studied in the SLE framework (such as $\text{SLE}(\kappa,\rho)$ processes, loop ensembles, exploration trees, or space-filling SLEs of intensity $4<\kappa<8$).

\ms

The organization of this paper is the following. We start by introducing all the necessary formalism of the capacities, stretching maps, flows and driving processes in Section \ref{formalism}. In Section \ref{properties}, we then prove that the burglars defined through this formalism verify symmetry, stretching covariance, side-locality, and local-time Markov property, which characterize their law up to a real parameter $\nu\in\R$ by Theorem \ref{thm:classification}. We also give a very natural interpretation of the link of burglars with Bass-Burdzy flows (discussed in \cite{AHS, Lupu_2019, Lupu_2021}) via our drivers and stretching flows approach. We derive the target-independence property at the critical intensity $\nu=1/2$.
Finally, in Section \ref{disintegration}, we show how this formalism relates to the burglars already studied in previous works, and allows to disintegrate general loop-soups conditionally on their local time profiles.

\section{Formalism and construction of the burglars}
\label{formalism}

To study processes consuming some available local time on an interval, one must first equip these intervals with a corresponding underlying geometric structure (that is in some sense analogous to the conformal analysis setup in the complex plane).

\subsection{Capacities and stretching morphisms} 
For an open (not necessarily bounded) interval $I=(a,b)\subset \R$, we call \textit{local conductance} or \textit{capacity} any positive continuous function $\lambda:I\to \R_+^*$, and we will say that the capacity $\lambda$ is \textit{admissible} (on the interval $I$) if for some (and hence for all) $x_0\in I$, the total resistances on each side of $x_0$ are infinite, i.e. $$R_\lambda(\inf I,x_0)=\int_{\inf I}^{x_0}\frac{1}{\lambda} = +\infty \qquad \text{and} \qquad R_\lambda(x_0,\sup I) = \int_{x_0}^{\sup I}\frac{1}{\lambda} = +\infty.$$

We will consider space transformations that preserve the local resistance metric $R(x,x+ \dd x) = \dd x/\lambda(x)$. For two intervals $I,J$ and two corresponding admissible capacities $\lambda,\rho$ on $I$ and $J$, we will call \textit{Stretching morphism} from $\lambda$ to $\rho$, any increasing diffeomorphism $g$ from $I$ to $J$ such that for all $x,y\in I$, $$R_{\lambda}(x,y)=R_{\rho}(g(x),g(y)),$$ i.e. in an integral form: $$\int_x^y \frac{\dd u}{\lambda(u)}=\int_{g(x)}^{g(y)} \frac{\dd u}{\rho(u)}.$$ This is equivalent to locally verifying the differential relation $$g'(x)=\rho(g(x))/\lambda(x), \qquad \text{for all }x\in I.$$

This implies that if we fix two anchors $x_0\in I$ and $z_0\in J$, then there exists a unique stretching $g$ from $\lambda$ to $\rho$ such that $g(x_0)=z_0$. Stretching morphisms between admissible capacities with given anchors therefore have a groupoid structure. 
Such diffeomorphisms verify $$\lim_{x\to \inf I} g(x) = \inf J \quad \text{and} \quad \lim_{x\to \sup I} g(x) = \sup J,$$
and will therefore always be implicitly extended as functions between the compactified intervals $\overline I := I\cup\{\inf I, \sup I\}$ and $\overline J:=J\cup\{\inf J, \sup J\}$. When $g$ is a stretching morphism from capacity $\lambda$ to $\rho$, we will sometimes write that $\rho = g(\lambda)$.

\ms

A first central example of capacity is the constant (or uniform) capacity $\rho = 1$ on $\R$. It is admissible, and the stretching morphisms from any admissible capacity $\lambda$ on an interval $I$ to the uniform capacity are of the form:
\begin{equation}
\label{stretchunif}
\eta_{\lambda,x_0}(x)=\int_{x_0}^x \frac{\dd u}{\lambda(u)}, \qquad \text{for }x\in I.
\end{equation}
They are unique up to the choice of an anchor $x_0$. The corresponding Möbius transforms (i.e. the stretching morphisms from this capacity to itself) are simply the translations.

Another family of admissible capacities are the squared Bessel bridges. By usual properties of squared Bessel processes (see \cite{Lupu_2021,AHS}), we can define excursions of a squared Bessel process of any dimension $2\nu\in \R$ on some interval $I$, and the corresponding capacity $\lambda$ is almost surely admissible on $I$. Such an excursion corresponds for instance to the total local-time profile of some cluster of a loop-soup of intensity $\nu>0$.

\subsection{Admissible processes}
For any given capacity $\lambda_0$ on an interval $I$, we will say that a process $X$ defined on $\overline I$ up to a random final time $T(X)$ is \textit{admissible} for $\lambda_0$ if $X$ has strictly increasing quadratic variations and admits a bicontinuous local time profile $L_X$, which is dominated by the capacity $\lambda_0$, i.e. for all $x\in I$, $$L_X(T(X),x)\leq \lambda_0(x).$$ Here and in all this article, we only consider the process $X$ as a ``geometric path", and thus always understand $X$ up to some time-reparametrization (that would describe the same geometric path, but walked at a different speed). The local time $L_X$ of a process $X$ is then always defined via the occupation density-formula $$\int_0^t f(X_s)\dd[X]_s=\int_I f(r)L_{X}(t,r)\dd r,$$
which does not depend on the time-parametrization of $X$. In particular if $X$ is admissible for $\lambda_0$, any time-change of $X$ is also admissible so that this notion is well-defined.
When we will make the time-parameter $t$ appear in the rest of the article, it will always be relatively to the chosen time-parametrization for $X$. 

In particular, we will also sometimes discuss ``final times'' or ``stopping times'' of such paths / processes. These must also always be understood up to time reparametrization, and are indeed well-defined since e.g.\ up to time-change, a stopping time for the filtration of a process is also a stopping time in the time-changed filtration.

\ms

For such an admissible process (and any corresponding time-parametrization), we call the capacity $\lambda_t(\cdot):=\lambda_0(\cdot)-L_X(t,\cdot)\geq 0$ the ``capacity left at time $t$'', such that the process after time $t$, $(X_{t+\cdot})$ will be admissible for the capacity $\lambda_t$. 

\ms

Finally, for any target point $x_0\in \overline I$, we define for every $x\in I$ the disconnection time of $x$ from $x_0$ by $X$, $$T(x,x_0)=\inf\{t\geq0,|R_{\lambda_t}(x,y)|=+\infty\text{ for some } y\in (x,x_0)\},$$
and the non-increasing intervals $$I_t = \{ x \in I, T(x,x_0) > t\}.$$
We then say that $X$ is admissible at the target $x_0$ for the capacity $\lambda_0$ if for all $t\geq 0$, $\lambda_t$ is an admissible capacity on $I_t$, and $X_t \in \overline I_t$. 
Similarly, we can note that when $X$ is admissible at $x_0$ for the capacity $\lambda_0$, then the process $(X_{t+\cdot})$ is admissible at $x_0$ for the capacity $\lambda_t$, and that this definition does not depend on the time-parametrization of $X$.

We can actually realize $T(x,x_0)$ as the first time at which the capacity $\lambda_t$ reaches 0 at some point inside $[x,x_0)$. The process $X$ can therefore also be realized as living in the varying capacity $\lambda_t$ that it eats up continuously, on the interval $I_t$ that gets smaller every time the capacity $\lambda_t$ reaches 0 at some new point. In some sense, the capacity $\lambda_t$ left at time $t$ represents some remaining local time available for the process $X$ to ``eat out" or to exhaust, so we will sometimes take this point of view and call $\lambda_t$ the available local-time left at time $t$.

\begin{remark}
Note that for $X$ admissible for $\lambda_0$, the admissibility condition imposes $[X]_{\infty}  := [X]_{T(X)} \leq \int_I \lambda_0$, so the total quadratic variation of $X$ is bounded by the total initial available local time. This means in particular that in the case of \emph{finite} initial capacity $\int_I \lambda_0 < +\infty$, any admissible process $X$ will have finite quadratic variation and thus it will be natural to parametrize it by its quadratic variation up to full time. We call this parametrization the \emph{standard} parametrization, or the quadratic variation parametrization.
\end{remark}

\subsection{Stretching flows and Drivers}
In all this article, $\1_{(a,b)}(y)$ denotes the \textbf{signed} indicator of the oriented interval $(a,b)$, with the usual extension when an endpoint is infinite, namely
$$\1_{(a,b)}(y):=\mathbf 1_{\{a<y<b\}}-\mathbf 1_{\{b<y<a\}}.$$
Let us fix an admissible process $(X_t)_{0 \leq t  \leq T(X)}$ at the target $x_0\in \overline{I}$ for the capacity $\lambda_0$ on $I$.
For $g_0$ a stretching morphism from $\lambda_0$ to the constant capacity 1 on $\R$, as discussed in the introduction, it is natural to associate the \textit{stretching flow} $(g_t)_{0 \leq t  \leq T(X)}$ defined by the following flow of integral equations:
\begin{equation}
\label{floweq}
g_t(x)=g_0(x)+ \int_0^t\frac{\1_{(x_0,x)}(X_s)}{\lambda_s(X_s)^2}\dd [X]_s, \quad \text{for } x\in I.
\end{equation}

For any fixed $x\in I$, the solution $g_t(x)$ is defined up until the disconnection time $T(x,x_0)$, therefore $g_t$ is well-defined on the interval $I_t$. 

Notice that the stretching flow $(g_t)$ is chosen to be invariant by time-change. Indeed, for any absolutely continuous time-change $\xi$, if we set $\tilde X_t = X_{\xi(t)}$ and $\tilde \lambda_t = \lambda_0 - L_{\tilde X}(t,\cdot) = \lambda_{\xi(t)}$, then
$$g_{\xi(t)}(x)-g_0(x) =\int_0^{\xi(t)}\frac{\1_{(x_0,x)}(X_s)}{\lambda_s(X_s)^2}\dd[X]_s =\int_0^t\frac{\1_{(x_0,x)}(\tilde X_u)}{\tilde\lambda_u(\tilde X_u)^2 }\dd[\tilde X]_u.$$
Therefore the stretching flow $\tilde g$ associated to $\tilde X$ is simply $(\tilde g_t )= (g_{\xi(t)})$, the time-changed flow associated to $X$, so that this flow is well-defined even up to time-parametrization.

\ms

The motivation behind this definition is that the process $X$ ``exhausts" the available capacity $\lambda_t$ at rate $\dd [X]_t$ at the point $X_t$, so the stretching that uniformizes the exhausted capacity $\lambda_t$ to the constant capacity $1$ and sends $x_0$ to $z_0:=g_0(x_0)\in \overline\R$ has to be the initial stretching $g_0$, compensated on each side of $x_0$. The following proposition indeed states that defining $g_t$ as we did is the exact choice necessary to get the right uniformization.

\begin{proposition}[Stretching flow]
\label{stretchflow}
For all $t\geq 0$, the map $g_t : I_t \to \R$ defined by (\ref{floweq}) is a stretching morphism from the capacity $\lambda_t$ on $I_t$ to the constant capacity $1$ on $\R$, with $g_t(x_0) = g_0(x_0) = z_0$. If $x_0\in I$, it is the unique such stretching, and if $x_0\in\partial I$, equation~(\ref{floweq}) fixes the otherwise free translation through the initial uniformization $g_0$.
\end{proposition}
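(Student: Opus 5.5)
The plan is to verify the integral form of the stretching relation directly: for all $x<y$ in $I_t$,
$$g_t(y)-g_t(x)=\int_x^y\frac{\dd u}{\lambda_t(u)}.$$
Together with the fact that $\lambda_t$ is admissible on $I_t$ (part of the hypothesis that $X$ is admissible at $x_0$), this shows that $g_t$ is an increasing diffeomorphism onto $\R$ with $g_t'=1/\lambda_t$, i.e.\ a stretching morphism from $\lambda_t$ to $1$. The anchor property is immediate: $\1_{(x_0,x_0)}\equiv 0$, so (\ref{floweq}) gives $g_t(x_0)=g_0(x_0)=z_0$.

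Uniqueness then follows from the earlier remark that stretchings to the uniform capacity are exactly the maps $\eta_{\lambda_t,\cdot}$ of (\ref{stretchunif}), unique up to translation. When $x_0\in I$, the condition $g_t(x_0)=z_0$ fixes that translation. When $x_0\in\partial I$, the only freedom is the additive constant, and (\ref{floweq}) fixes it through $g_0$.

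\textbf{Step 1 (the $g$ side).} Subtracting the two instances of (\ref{floweq}) and using the additivity of signed indicators, $\1_{(x_0,y)}-\1_{(x_0,x)}=\1_{(x,y)}$ off the two points $\{x,y\}$, we get
$$g_t(y)-g_t(x)=g_0(y)-g_0(x)+\int_0^t\frac{\1_{(x,y)}(X_s)}{\lambda_s(X_s)^2}\,\dd[X]_s .$$
The exceptional times $\{s:X_s\in\{x,y\}\}$ are $\dd[X]$-null by the occupation-density formula, so they do not contribute. Since $g_0(y)-g_0(x)=\int_x^y \dd u/\lambda_0(u)$, it remains to show
$$\int_x^y\Big(\frac1{\lambda_t(u)}-\frac1{\lambda_0(u)}\Big)\dd u=\int_0^t\frac{\1_{(x,y)}(X_s)}{\lambda_s(X_s)^2}\,\dd[X]_s .$$

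\textbf{Step 2 (the $\lambda$ side).} For each fixed $u$, the map $s\mapsto L_X(s,u)$ is continuous and nondecreasing, and $\lambda_s(u)=\lambda_0(u)-L_X(s,u)$. The chain rule for Stieltjes integrals then gives
$$\frac1{\lambda_t(u)}-\frac1{\lambda_0(u)}=\int_0^t\frac{\dd_sL_X(s,u)}{\lambda_s(u)^2}.$$
We integrate this over $u\in(x,y)$ and apply Fubini. The key tool is the time-dependent occupation formula
$$\int_0^t h(s,X_s)\,\dd[X]_s=\int_\R\int_0^t h(s,u)\,\dd_sL_X(s,u)\,\dd u$$
for nonnegative jointly measurable $h$, applied with $h(s,u)=\1_{(x,y)}(u)\lambda_s(u)^{-2}$. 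To prove this formula, first take $h(s,u)=\1_{(a,b]}(s)f(u)$, where it reduces to the usual formula applied at times $a$ and $b$. Then extend by a monotone class argument.

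\textbf{Main obstacle: finiteness near zeros of $\lambda$.} The delicate point is making sure the integrands are finite, so the manipulations above are legitimate rather than $\infty=\infty$. Suppose $[x,y]$ does not contain a point where some $\lambda_s$, $s\le t$, vanishes. Then joint continuity of $(s,u)\mapsto\lambda_s(u)$ and monotonicity in $s$ give $\min_{[0,t]\times[x,y]}\lambda>0$, and everything is finite. By the definition of $I_t$, no zero of $\lambda_s$ occurs in $[x,x_0)$ or $(x_0,y]$ when $x,y\in I_t$. The only possible trouble is therefore at $x_0$ itself, when $x<x_0<y$.

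To handle this case, prove the identity separately on $[x,x_0-\varepsilon]$ and on $[x_0+\varepsilon,y]$. Then let $\varepsilon\to0$ by monotone convergence, which applies because all integrands are nonnegative. Both sides then agree in $[0,\infty]$. Finiteness of the left side, and hence of $g_t$ on $I_t$, comes from admissibility of $\lambda_t$ on $I_t$, i.e.\ local integrability of $1/\lambda_t$ there.
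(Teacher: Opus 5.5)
Your proof is correct and follows essentially the paper's own argument: you subtract two instances of (\ref{floweq}), use additivity of the signed indicators, and identify the resulting $\dd[X]$-integral with $R_{\lambda_t}(x,y)-R_{\lambda_0}(x,y)$ through the time-dependent occupation-density formula, which you justify in more detail than the paper does. Two small remarks. First, the identity $\1_{(x_0,y)}-\1_{(x_0,x)}=\1_{(x,y)}$ also fails at the point $x_0$ when $x<x_0<y$; this is harmless, since $\{s: X_s=x_0\}$ is $\dd[X]$-null as well. Second, your $\varepsilon$-cutoff around $x_0$ is unnecessary: $x_0\in I_t$, so admissibility of $\lambda_t$ on $I_t$ together with $\lambda_s\ge\lambda_t$ for $s\le t$ already gives a positive lower bound for $\lambda_s(u)$ on $[0,t]\times[x,y]$.
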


\begin{proof}
Fix $x,y\in I_t$. By the occupation-density formula and $\lambda_t=\lambda_0-L_t$,
\begin{align*}
R_{\lambda_t}(x,y)-R_{\lambda_0}(x,y)
&=\int_x^y\left(\frac1{\lambda_t(r)}-\frac1{\lambda_0(r)}\right)\dd r\\
&=\int_{x}^y\int_0^t \frac{\dd L_s(r)}{\lambda_s(r)^2}\dd r =\int_0^t\frac{\1_{(x,y)}(X_s)}{\lambda_s(X_s)^2}\dd[X]_s.
\end{align*}
Away from the interval endpoints, the additivity of signed interval indicators gives $\1_{(x_0,y)}-\1_{(x_0,x)}=\1_{(x,y)}.$
Consequently, equation~(\ref{floweq}) and the fact that $g_0$ stretches the capacity $\lambda_0$ to the constant capacity $1$ yield
$$g_t(y)-g_t(x)=R_{\lambda_t}(x,y).$$
Therefore $g_t$ is indeed a stretching morphism from $\lambda_t$ to the constant capacity $1$. For an interior target, evaluating (\ref{floweq}) at $x_0$ fixes $g_t(x_0)=z_0$ and hence gives uniqueness. For a boundary target, the same equation evaluated at any interior point fixes the translation relative to $g_0$.
\end{proof}

When we have such a flow of stretching morphisms $(g_t)$ associated to the process $X$, we can apply it to the process $X$ itself since we always have $X_t \in \overline I_t$. This defines the \textit{driver process} $D_t = g_t(X_t)$, taking values in $\overline\R = \R\cup\{\pm\infty\}$. In some sense, while our process $X$ lives on the \textit{varying} capacity $\lambda_t$, its driver $D$ is continuously uniformized to live on the constant capacity $1$. Here, the driver $D$ and the stretching flow $g$ depend of course heavily on the initial capacity $\lambda_0$, the target anchor $x_0$ and the initial uniformization $g_0$.

\ms

Locally, the fact that $g_t$ is a stretching morphism from $\lambda_t$ to $1$ means that for $x\in I_t$, $\partial_x g_t(x) = 1/\lambda_t(x) $. Here, the flow $(t,x)\mapsto g_t(x)$ is $\mathcal C^1$ in space and of finite variation in time, therefore the quadratic variation of the driver $ D_t = g_t(X_t)$ is explicit
$$\dd [D]_t = \frac{\dd [X]_t}{\lambda_t(X_t)^2}. $$
Since stretching morphisms preserve the order, the condition $x_0<X_t<x$ is equivalent to $ g_t(x_0) = z_0<D_t<g_t(x) $, and we can then rewrite the flow equation (\ref{floweq}) in terms of the driver :
\begin{equation}
\label{floweq2}
g_t(x)=g_0(x)+\int_0^t\1_{(z_0,g_s(x))}(D_s)\dd [D]_s.
\end{equation} 
This rewriting is once again invariant by time-change, and useful insofar as it completely determines the stretching flow $(g_t)$ only from the driver $(D_t)$, and thus also allows us to get back to the original process $(X_t)$ only from the knowledge of its driver.

\ms

We first require a technical Lemma about the solutions of the flow equation (\ref{floweq2}) in the spirit of what is done in \cite{bass1998stochasticbifurcationmodels} and \cite{HU2000287}.

\begin{lemma}
\label{lemmatech}
Let $g_0$ be a stretching morphism from some capacity $\lambda$ on an interval $I$ to the constant capacity 1, and fix $x_0\in\overline I$. This means that for $z_0:=g_0(x_0)\in\overline\R$, $g_0$ is a $\mathcal C ^1$-diffeomorphism from $I$ to $\R$ of the form $g_0 : x\in I \mapsto z_0+\int_{x_0}^x {\dd r} / \lambda(r) \in \R $ (understood as a limit when $z_0=\pm\infty$). Let $(D_t)_{0\leq t\leq T}$ be a continuous process with strictly increasing quadratic variation, taking values in $\overline \R = \R\cup\{\pm\infty\}$ (defined up to time-change) and verifying the following two conditions.

\medskip

\noindent
\textnormal{(i) Well-posedness of the flow.}
For every $x\in I$, the integral equation
$$g_t(x)=g_0(x)+\int_0^t\1_{(z_0,g_s(x))}(D_s)\dd[D]_s$$
admits a unique solution until time $T(x):=\inf\{t\geq 0, g_t(x)=\pm\infty\}$, and the map
$(t,x)\mapsto g_t(x)$
is continuous on its domain $\{(t,x), x\in I, 0\le t < T(x)\}$. 

\medskip

\noindent
\textnormal{(ii) Jointly continuous local times along the flow.}
For $x\in I$, assume that the family of processes $(R_t^x)_{0\leq t <T(x)} :=(D_t-g_t(x))_{0\leq t <T(x)} $ admits jointly continuous local
times: namely, assume that there exists a nonnegative field
$(L_t^x(a))_{x\in I,a\in\R,\, 0\leq t<T(x)}$, jointly continuous (at least in a neighborhood of $a=0$)
in $(t,x,a)$, such that for every $x\in I$, every $0\leq t<T(x)$,
and every bounded Borel function $f$,
$$\int_0^t f(R_s^x)\dd[R^x]_s=\int_{\mathbb R}f(a)L_t^x(a)\dd a.
$$

\ms

Write   
$L_t(x):=L_t^x(0).$ For $0\leq t \leq T$, let $I_t := \{ x \in I,\, T(x) > t\}$, and finally let us define for such a process $D$ its final time $T(D) := \inf\{s\leq T,\, \mathring I_s=\emptyset\}\wedge T $.
Then the following properties hold.

\begin{enumerate}
\item For every $0 \leq t < T(D)$, the map $x\mapsto g_t(x)$ is an increasing $C^1$-diffeomorphism from the open interval $I_t := \{ x \in I,\, T(x) > t\}$ to $\R$, with
$$\partial_x g_t(x)=({\exp(L_t(x))}) / {\lambda(x)}.$$

\item  The process $X$ defined for $0\leq t< T(D)$ by
$X_t:=g_t^{-1}(D_t)\in \overline I_t$
is continuous and admits a bicontinuous local time $(\Lambda_t(x))_{0\leq t <T(D),\,x\in I}$, where
$\Lambda_t(x)=\lambda(x)\left(1-\exp\bigl(-L_t(x)\bigr)\right)$, i.e.\ for every bounded Borel function $f$, 
$$\int_0^t f(X_s)\dd[X]_s=\int_{\mathbb R}f(x)\Lambda_t(x)\dd x.
$$
\end{enumerate}
\end{lemma}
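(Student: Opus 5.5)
Differentiating the flow equation in $x$ is the natural route, since formally $\partial_x\1_{(z_0,g_s(x))}(D_s)$ is a Dirac mass at $D_s=g_s(x)$. To make this rigorous I would work with increments in $x$ rather than derivatives. Fix $x<y$ in $I_t$ (say both on the same side of $x_0$; the general case follows by additivity of signed indicators, as in Proposition~\ref{stretchflow}) and set $\Delta_t:=g_t(y)-g_t(x)$. By (i),
$$\Delta_t=\Delta_0+\int_0^t\mathbf 1\{g_s(x)<D_s<g_s(y)\}\dd[D]_s ,$$
so $\Delta$ is nondecreasing. Hence $x\mapsto g_t(x)$ is strictly increasing on $I_t$ for $t<T(D)$, and $I_t$ is an interval because $T(\cdot)$ is monotone on each side of $x_0$.

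The key step is to show that, as $y\downarrow x$,
$$\frac{1}{\Delta_0}\int_0^t\mathbf 1\{0<R^x_s<\Delta_s\}\dd[D]_s \;\longrightarrow\; \int_0^t \frac{\Delta_s}{\Delta_0}\,\dd_s L^x_s(0).$$
The reason is that $\dd[R^x]_s=\dd[D]_s$, since $g_s(x)$ has finite variation, and that by (ii) the occupation of a strip of width $\Delta_s$ near $0$ is $\approx \Delta_s\,\dd_s L^x_s(0)$. This yields the linear integral equation $\phi_t=1+\int_0^t\phi_s\,\dd L_s(x)$ for $\phi_t:=\lim \Delta_t/\Delta_0$, whose solution is $\exp(L_t(x))$. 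Since $\partial_xg_0=1/\lambda$, this gives $\partial_xg_t(x)=\exp(L_t(x))/\lambda(x)$.

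To make the limit rigorous, I would instead sandwich. Use the monotonicity of $\Delta$ together with a Grönwall argument on $\log\Delta$. Bound $\Delta_s$ between $\Delta_0\,e^{L^x_s(0)\pm\varepsilon}$ on intervals where the joint continuity in $(t,x,a)$ from (ii) controls $L^{x'}_s(a)$ uniformly for $x'\in[x,y]$ and $|a|\le\sup_{s\le t}\Delta_s$. The strip between $g_s(x)$ and $g_s(y)$ moves with $s$, so the occupation integral over the moving strip should be written as $\int_x^y \dd_s L^{x'}_s(0)\,\partial_{x'}g_s(x')\,\dd x'$ heuristically. I would justify this by applying (ii) to $R^{x'}$ for $x'$ on a fine mesh of $[x,y]$. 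This uniform-continuity and moving-strip step is the main obstacle. Continuity of $x\mapsto \exp(L_t(x))/\lambda(x)$ then upgrades differentiability to $C^1$.

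For surjectivity onto $\R$, at the endpoints of $I_t$ the value $g_t$ tends to $\pm\infty$ by the definition of $T(x)$ and by continuity (i), or by admissibility of $\lambda$ if the endpoint lies in $\partial I$. This proves (1).

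For (2), $X_t=g_t^{-1}(D_t)$ is continuous by joint continuity of $(t,x)\mapsto g_t(x)$ and the strict monotonicity just shown. Write $h_t=g_t^{-1}$, which is $C^1$ in space and of finite variation in time. Then
$$\dd[X]_t=(\partial_z h_t(D_t))^2\dd[D]_t=\Big(\lambda(X_t)e^{-L_t(X_t)}\Big)^2\dd[D]_t .$$
For a test function $f$, substitute $x=h_s(a+g_s(\cdot))$ and use (ii) with the change of variables $a\leftrightarrow x'$. Writing $D_s=g_s(x')$, the occupation measure of $D$ near level $g_s(x')$ is $\partial_{x'}g_s\,\dd_sL_s(x')\,\dd x'$. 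This gives
$$\int_0^t f(X_s)\dd[X]_s=\int_I f(x')\int_0^t \lambda(x')^2e^{-2L_s(x')}\frac{e^{L_s(x')}}{\lambda(x')}\dd_s L_s(x')\,\dd x' .$$
The inner integral equals $\lambda(x')(1-e^{-L_t(x')})$. Bicontinuity of this $\Lambda_t(x)$ follows from the joint continuity in (ii).
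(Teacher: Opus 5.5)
Your architecture is the paper's. You take difference quotients $\Delta_t/\Delta_0$, identify the increment of $\Delta$ as the $\dd[D]$-occupation of $R^x$ in the strip $(0,\Delta_s)$, and solve the linear Stieltjes equation $\phi_t=1+\int_0^t\phi_s\,\dd L_s(x)$ by $e^{L_t(x)}$ via Gr\"onwall. For part (2) you use $\dd[X]=\lambda(X)^2e^{-2L(X)}\dd[D]$. The gap is exactly the step you flag as the main obstacle, and the device you propose does not close it. Writing the occupation of the moving strip as $\int_x^y \dd_sL^{x'}_s(0)\,\partial_{x'}g_s(x')\,\dd x'$ presupposes the derivative $\partial_{x'}g_s$ you are trying to construct. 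Cutting $[x,y]$ into a mesh only reproduces, on each cell $[x'_k,x'_{k+1}]$, the same moving-strip estimate for $R^{x'_k}$, which you have not proved. In fact no uniformity in $x'$ is needed for the derivative at $x$.

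The missing observation is that the strip is a time-dependent strip for the \emph{single} process $R^x$. The occupation formula in its time-dependent form (monotone class argument) therefore gives exactly
\[
\Delta_t-\Delta_0=\int_0^t\mathbf 1_{\{0<R^x_s<\Delta_s\}}\,\dd[R^x]_s=\int_0^\infty\int_0^t\mathbf 1_{\{a<\Delta_s\}}\,\dd_sL^x_s(a)\,\dd a .
\]
The control then goes as follows.
\begin{enumerate}
\item Since $\Delta$ is nondecreasing, $s\mapsto\mathbf 1_{\{a<\Delta_s\}}$ has total variation at most $1$. Replacing $L^x_s(a)$ by $L^x_s(0)$, which turns the right side into $\int_0^t\Delta_s\,\dd L_s(x)$, therefore costs at most $2\sup_{s\le t}|L^x_s(a)-L^x_s(0)|$ for each $a$.
\item Stop at the first time $\Delta_s/\Delta_0\ge M$, with $M>e^{L_t(x)}$. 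This confines $a$ to $[0,M\Delta_0]$, where continuity of $L^x_\cdot(\cdot)$ at $a=0$, uniformly in $s\le t$, makes the total error $o(\Delta_0)$.
\item Gr\"onwall against $\phi$ then shows the stopping time is not reached for small $y-x$, and gives the right derivative $e^{L_t(x)}/\lambda(x)$.
\end{enumerate}
Joint continuity in $x$ is used only afterwards, to make this right derivative continuous and conclude $C^1$. This is precisely the paper's proof, with $q^h=\Delta/h$.

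In part (2), your change of variables $a\leftrightarrow x'$ across the family $R^{x'}$ carries the same moving-level issue. The paper avoids it by reading the $\dd[D]$-occupation of $X$ directly off the flow equation,
\[
\int_0^t\mathbf 1_{\{X_s<x\}}\,\dd[D]_s=g_t(x)-g_0(x)+\int_0^t\mathbf 1_{\{D_s<z_0\}}\,\dd[D]_s ,
\]
whose $x$-derivative is $(e^{L_t(x)}-1)/\lambda(x)$ by part (1). Your weight $\lambda^2e^{-2L}$ then gives $\Lambda_t$ exactly as you computed.
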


\begin{proof} 
It is straightforward that for all $t>0$, the unique solution $x\mapsto g_t(x)$ is strictly increasing (and thus order-preserving). 
Fix $x\in I$. For $h>0$ and $t<T(x)$, let
\begin{eqnarray*}
q_t^h:=\frac{g_t(x+h)-g_t(x)}{h} &=& \frac{1}{h} \bigl( g_0(x+h)-g_0(x)+
\int_0^t\mathbf{1}_{\{g_s(x)< D_s < g_s(x+h)\}} \dd [D]_s \bigr) \\
&=& \frac{g_0(x+h)-g_0(x)}{h}+\frac1h\int_0^t\mathbf{1}_{\{0<R_s^x<hq_s^h\}}\dd [R^x]_s.
\end{eqnarray*}
Using the occupation-density formula and the change of variables $a=hr$, we obtain
$$q_t^h=\frac{g_0(x+h)-g_0(x)}{h}+\int_0^\infty\int_0^t\mathbf{1}_{\{r<q_s^h\}}\dd_sL_s^{x}(hr)\dd r.$$
For $0\leq t < T(x)$, let
$Q_t:=\exp\bigl(L_t(x)\bigr)/\lambda(x)$.
Since $L_0(x)=0$ and $t\mapsto L_t(x)$ is continuous and nondecreasing,
we have
\begin{equation}
\label{eq:stieltjes-exponential}
Q_t=\frac{1}{\lambda(x)} +\int_0^tQ_s\dd L_s(x).
\end{equation}
Fix $0<T<T(x)$, choose $M>Q_T$, and define $\tau_h := \inf\{t\geq0:q_t^h\geq M\}\wedge T$. We will show that for $h$ small enough, $q^h_t$ comes close to $Q_t$ and thus is smaller than $M$, so that actually $\tau_h = T$. 
By definition, for $t\leq\tau_h$, we have $q_t^h\leq M$, therefore
$$q_t^h=\frac{g_0(x+h)-g_0(x)}{h}+\int_0^M\int_0^t\mathbf{1}_{\{r<q_s^h\}}\dd_sL_s^{x}(hr)\dd r.$$
Adding and subtracting $L_s^x(0)$, we get
\begin{equation}
\label{eq:one-sided-stieltjes-approximation}
q_t^h
=
\frac{1}{\lambda(x)}+\int_0^tq_s^h\dd L_s(x)+S_t^h+\hat S^h,
\qquad 0\leq t\leq\tau_h,
\end{equation}
where
$$S_t^h:=\int_0^M\int_0^t\mathbf{1}_{\{r<q_s^h\}}\dd_s\bigl(L_s^{x}(hr)-L_s^x(0)\bigr)\dd r,$$
and, since $g_0$ is a stretching morphism from capacity $\lambda$ to the constant capacity 1,
$$ \hat S^h:=\frac{g_0(x+h)-g_0(x)}{h} -\frac{1}{\lambda(x)} \longrightarrow 0 \quad \text{as } h\to 0.$$
For every fixed $r$, the map $s\mapsto\mathbf{1}_{\{r<q_s^h\}}$
is nondecreasing and has total variation at most $1$. Therefore, we get the upper bound
$$\sup_{0\leq t\leq\tau_h}|S_t^h|\leq2M\sup_{\substack{0\leq s\leq T\\0\leq r\leq M}}\left|L_s^{x}(hr)-L_s^x(0)\right| =:\varepsilon_h$$
where, by the bicontinuity of the local-time profile $L^x$,
$\varepsilon_h\to 0$ as $h\to 0$.
Subtracting \eqref{eq:stieltjes-exponential} from
\eqref{eq:one-sided-stieltjes-approximation}, and applying Gronwall's lemma, we then obtain
$$\sup_{0\leq s\leq t} |q_s^h-Q_s| \leq (\varepsilon_h+|\hat S^h|)\exp\bigl(L_t(x)\bigr), \qquad 0\leq t\leq\tau_h.$$
For all sufficiently small $h$, we have $(\varepsilon_h+|\hat S^h|)\exp\bigl(L_T(x)\bigr)<M-Q_T.$
Therefore at time $\tau_h$,
$$q_{\tau_h}^h\leq Q_{\tau_h}+(\varepsilon_h+|\hat S^h|)\exp\bigl(L_T(x)\bigr)<Q_T+(M-Q_T)=M,$$
and necessarily
$\tau_h=T$. Consequently, as $h\to 0$, 
$$\sup_{0\leq t\leq T}\left|\frac{g_t(x+h)-g_t(x)}{h}-\frac{\exp\bigl(L_t(x)\bigr)}{\lambda(x)}\right|\longrightarrow 0.$$
We get that the map $x\mapsto g_t(x)$ is continuous with right derivative $\exp\bigl(L_t(x)\bigr)/\lambda(x)$, which is also continuous, therefore it is of class $\mathcal C^1$, with strictly positive derivative
$$\partial_x g_t(x)=\frac{\exp\bigl(L_t(x)\bigr)}{\lambda(x)}>0.$$
The fact that $g_t$ is continuous and increasing implies immediately that $I_t:= \{ x \in I,\, T(x) > t\}$ is an open interval, and imposes
$$\lim_{x\to\inf I_t}g_t(x)=-\infty,\qquad\text{and}\qquad\lim_{x\to\sup I_t}g_t(x)=+\infty.$$
Therefore $g_t$ is indeed an increasing $\mathcal C^1$-diffeomorphism from $I_t$ to $\R$.
The joint continuity of $g$ and of its inverse implies that $X_t=g_t^{-1}(D_t)$ is continuous. Up to the endpoints, which have zero occupation measure with respect to $\dd[D]$, we have
$$\1_{(z_0,g_s(x))}(D_s)=\mathbf 1_{\{D_s<g_s(x)\}}-\mathbf 1_{\{D_s<z_0\}},$$
the flow equation can be rewritten as
$$g_t(x)-g_0(x)+A_t^{z_0}=\int_0^t\mathbf{1}_{\{X_s<x\}}\dd [D]_s,
\qquad A_t^{z_0}:=\int_0^t\mathbf 1_{\{D_s<z_0\}}\dd[D]_s.$$
The left-hand side is continuously differentiable in $x$, and $A_t^{z_0}$ does not depend on $x$. Hence the occupation measure of $X$ relative to the measure $\dd [D]_t$ on $[0,t]$ is absolutely continuous, with density $\ell_t(x)$ verifying
$$\ell_t(x) = \partial_x g_t(x)-\partial_x g_0(x)=\left(\exp\bigl(L_t(x)\bigr)-1 \right)/\lambda(x).$$
Moreover, $D_t = g_t(X_t)$ and $g_t$ is of bounded variation in $t$, thus $X$ has quadratic variation verifying $\dd [D]_t = \bigl(\partial_x g_t(X_t)\bigr)^2\dd [X]_t = \lambda(X_t)^{-2}\exp\bigl(2L_t(X_t)\bigr)\dd [X]_t $. Therefore, the local time profile $\bigl(\Lambda_t(x)\bigr)$ of $X$, which is by definition its occupation measure relative to $\dd[X]_t$, is
$$ \Lambda_t(x) = \int_0^t\lambda(x)^{2}\exp\bigl(-2L_s(x)\bigr)\dd\ell_s(x)= \lambda(x)\left(1-\exp\bigl(-L_t(x)\bigr)\right).$$
The joint continuity of $\Lambda$ follows from that of $L$.
\end{proof}

 What is to be understood of this Lemma is that these are the natural conditions required to be able to consider a process $D$ until final time $T(D)$ as a driver of some other process. In particular, conditions (i) and (ii) of this technical Lemma \ref{lemmatech} are verified by any driver process $D$ of an actual admissible process $X$. Therefore, from now we call any such process $(D_t)_{0\leq t \leq T(D)}$ (defined up to time-reparametrization) verifying these conditions, an \textit{admissible driver} (or simply \textit{driver}, since this is coherent with the fact that any driver of an admissible process is an actual admissible driver). In particular, a driver process is necessarily continuous, has quadratic variation and bicontinuous local times.

\begin{proposition}[Characterization of an admissible process by its driver]
\label{characterization}
Let $\lambda_0$ be an admissible capacity on an interval $I$, let $g_0$ be a stretching morphism from $\lambda_0$ to the constant capacity $1$ on $\R$, and let $x_0\in \overline I$. Let $(D_t)_{0\leq t < T(D)}$ be an admissible driver. Then, there exists a unique process $(X_t)_{0\leq t < T(D)}$ (defined up to time-change) admissible at $x_0$ for the capacity $\lambda_0$ on $I$ such that $(D_t)_{0\leq t < T(D)}$ is the driver of the process $X$ for the target $x_0$ and initial uniformization $g_0$.

\end{proposition}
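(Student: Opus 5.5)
Existence comes directly from Lemma \ref{lemmatech}, applied with $\lambda=\lambda_0$, the given $g_0$ and $z_0=g_0(x_0)$. Since $D$ is an admissible driver, conditions (i) and (ii) hold. The lemma then gives the flow $(g_t)$ solving (\ref{floweq2}), the open intervals $I_t=\{T(x)>t\}$, and the continuous process $X_t:=g_t^{-1}(D_t)\in\overline I_t$. This $X$ has bicontinuous local time $\Lambda_t(x)=\lambda_0(x)(1-e^{-L_t(x)})$. Three things remain to check:
\begin{enumerate}
\item $X$ is admissible at $x_0$ for $\lambda_0$;
\item the flow attached to $X$ through (\ref{floweq}) coincides with the $g_t$ given by the lemma;
\item $g_t(X_t)=D_t$.
\end{enumerate}

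For admissibility, the local time bound is immediate: $\Lambda_t\le\lambda_0$. The remaining capacity is
$$\lambda_t=\lambda_0-\Lambda_t=\lambda_0e^{-L_t},$$
so part (1) of the lemma reads $\partial_xg_t=1/\lambda_t$ on $I_t$. Thus $g_t$ is an increasing diffeomorphism from $I_t$ onto $\R$ with $g_t'=1/\lambda_t$, i.e.\ a stretching morphism from $\lambda_t$ to $1$. This gives $R_{\lambda_t}=\pm\infty$ at both ends of $I_t$, so $\lambda_t$ is admissible on $I_t$.

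One must also identify $I_t$ with the disconnection set $\{T(x,x_0)>t\}$ of the paper's definition. For $x,y\in I_t$ we have $R_{\lambda_t}(x,y)=g_t(y)-g_t(x)<\infty$. Conversely, if $x\notin I_t$ then $g_s(x)\to\pm\infty$ at $T(x)\le t$, and the resistance between $x$ and points of $I_t$ toward $x_0$ blows up. Strictly increasing quadratic variation follows from
$$\dd[D]=\lambda_t(X_t)^{-2}\dd[X],$$
which was computed in the lemma's proof. Then, using the identity $\1_{(x_0,x)}(X_s)=\1_{(z_0,g_s(x))}(D_s)$ (order preservation), (\ref{floweq2}) turns into (\ref{floweq}). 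So the lemma's flow is exactly the stretching flow of $X$ from Proposition \ref{stretchflow}, and $D_t=g_t(X_t)$ by construction.

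For uniqueness, suppose $\tilde X$ is admissible at $x_0$ for $\lambda_0$ and has driver $D$, up to time change. Its stretching flow $\tilde g_t$ satisfies (\ref{floweq}), and hence (\ref{floweq2}) with the same $D$ and the same $g_0$. By the uniqueness in condition (i), $\tilde g_t(x)=g_t(x)$ for $t<T(x)$. Therefore $\tilde X_t=\tilde g_t^{-1}(D_t)=g_t^{-1}(D_t)=X_t$. Time-change invariance of the flow and of (\ref{floweq2}) makes this statement independent of the parametrization.

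The step I expect to be most delicate is the domain matching in this uniqueness argument. One has to show that the interval $\tilde I_t$ on which $\tilde g_t$ is defined, namely $\{T(x,x_0)>t\}$ for $\tilde X$, coincides with $I_t=\{T(x)>t\}$. Equivalently, one must show that $\tilde g_s(x)$ explodes exactly at disconnection. My first attempt would use Proposition \ref{stretchflow}: it gives $\tilde g_s(x)-z_0=R_{\lambda_s}(x_0,x)$, and this tends to infinity precisely when $x$ becomes disconnected from $x_0$, by continuity of the resistance in $s$. Boundary targets $x_0\in\partial I$ also need care; there one would instead compare with an interior reference point, as in the proof of Proposition \ref{stretchflow}.
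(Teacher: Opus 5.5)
Your proposal is correct and follows essentially the paper's analysis--synthesis argument. Uniqueness comes from uniqueness of solutions to (\ref{floweq2}), which forces $X_t=g_t^{-1}(D_t)$. Existence comes from Lemma~\ref{lemmatech}, together with the identities $\lambda_s(X_s)=\lambda_0(X_s)e^{-L_s(X_s)}$ and $\dd[D]=\lambda_s(X_s)^{-2}\dd[X]$, which show that the stretching flow of $X$ defined by (\ref{floweq}) is the lemma's flow. Your additional checks fill in details the paper leaves implicit: admissibility of $\lambda_t$ on $I_t$, identification of $I_t$ with the disconnection set, and domain matching via monotone convergence of $R_{\lambda_s}$.
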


\begin{proof}
Let us proceed by analysis-synthesis. We write the proof when the target
$x_0$ is an interior point of $I$, the boundary case is obtained in the
same way by keeping only the corresponding one-sided intervals. By the discussion preceding the previous Lemma, necessarily the eventual stretching flow $(g _t )$ has to verify the flow equation (\ref{floweq2}) with $z_0 := g_0(x_0)$,
$$ g_t(x)=g_0(x)+\int_0^t\1_{(z_0,g_s(x))}(D_s)\dd [ D]_s, \quad \text{for } x\in I.$$
So as discussed, up to a time-change, the only candidate is the process $X$ defined for $0\leq t < T(D)$ by $ X_t := g_t^{-1}(D_t)$. 

Reciprocally, Lemma \ref{lemmatech} gives exactly that this process $X$ is admissible for the capacity $\lambda_0$, and that its driver is actually the admissible driver $D$. Indeed, $X$ is continuous and admits a bicontinuous local times $\Lambda_t(x)=\lambda_0(x)\left(1-\exp\bigl(-L_t(x)\bigr)\right)\leq \lambda_0(x)$. Moreover, let $\tilde g$ be the corresponding stretching flow for initial stretching $g_0$. By definition, $\tilde g$ verifies the flow equations (\ref{floweq}).
\begin{eqnarray*}
    \tilde g_t(x)&=& g_0(x)+ \int_0^t\frac{\1_{(x_0,x)}(X_s)}{\lambda_s(X_s)^2}\dd [X]_s = g_0(x)+ \int_0^t\frac{\1_{(x_0,x)}(X_s)}{(\lambda_0(X_s)-\Lambda_s(X_s))^2}\dd [X]_s \\
    &=&  g_0(x)+ \int_0^t\frac{\1_{(x_0,x)}(X_s)}{(\lambda_0(X_s)\exp\bigl(-L_s(X_s)\bigr))^2}\dd [X]_s \\
    &=&  g_0(x)+ \int_0^t\1_{(x_0,x)}(X_s)\bigl(\partial_x g_s(X_s)\bigr)^2\dd [X]_s \\
    &=& g_0(x)+ \int_0^t\1_{(g_s(x_0),g_s(x))}(D_s)\dd [D]_s\\
    &=& g_t(x).
\end{eqnarray*}
Therefore, the flow $(g_t(x))$ is indeed the stretching flow associated to $X$ and the process $D$ is its driver.
\end{proof}

This characterization allows two things. First, to define an admissible process $X$ on some capacity $\lambda_0$, we can define a driver process $D$ on $\overline\R$, then for any initial stretching $g_0$ from $\lambda_0$ to $1$ and any target $z_0\in\overline\R$, there exists a unique process $X$ whose driver for $g_0$ at $x_0:=g_0^{-1}(z_0)$ is $D$, which we call the process \textit{driven} by $D$. Second, since the flow equation (\ref{floweq2}) is invariant by time-change, the process driven by a time-changed version of $D$ will be a time-change of $X$, so it will be the same process defined up to time-reparametrization. This justifies a posteriori that we can also understand drivers up to time-reparametrization ; and we will do so in the rest of the article.

\ms

The following proposition states that if $X,Y$ are two admissible processes defined on respective capacities $\lambda,\rho$ via the same driver $D$, then $Y= g(X)$ for $g$ some stretching morphism from $\lambda$ to $\rho$.

Let $\lambda$ and $\rho$ be two admissible capacities on respective intervals $I$ and $J$. Let $g$ be a stretching morphism from $\lambda$ to $\rho$, and let $h$ be a stretching morphism from $\rho$ to the constant capacity $1$ on $\R$. Let $x_0\in \overline I$, set $f = h\circ g$, $z_0 = g(x_0)\in \overline J$ and $y_0=f(x_0)\in\overline \R$. Let $(D_t)_{0\leq t \leq T(D)}$ be an admissible driver on $\overline\R$ at $y_0$. Let $(X_t)_{0 \leq t \leq T(D)}$ be the unique (up to time-change) admissible process at $x_0$ for the capacity $\lambda$, driven by $(D_t)_{0 \leq t \leq T(D)}$ for the initial uniformization $f$, and let $(f_t)_{0\leq t \leq T(D)}$ be the corresponding stretching flow.

\begin{proposition}[Transformation of admissible processes]
\label{transfprocess}

The process $ (g(X_t))_{0\leq t \leq T(D)}$ is the unique (up to time-change) admissible process at $z_0$ for the capacity $\rho$, driven by the same driver $(D_t)_{0 \leq t \leq T(D)}$ for the initial uniformization $h$; and the flow $(h_t)_{0\leq t \leq T(D)} = (f_t \circ g^{-1})_{0\leq t \leq T(D)}$ is the corresponding stretching flow.

\end{proposition}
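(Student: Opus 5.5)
Set $Y_t := g(X_t)$ and $h_t := f_t\circ g^{-1}$. The plan is to show that $(h_t)$ satisfies the driver form of the flow equation (\ref{floweq2}) with initial condition $h$ and anchor $h(z_0)=f(x_0)=y_0$. We then check that $h_t(Y_t)=D_t$ and conclude by the uniqueness part of Proposition \ref{characterization}. Nothing stochastic should be needed: the statement is essentially a deterministic relabelling of the space variable by the fixed diffeomorphism $g$.

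First, we will check that $h_0=f\circ g^{-1}=h$, which holds by definition. Since $f_t$ solves (\ref{floweq2}) with anchor $y_0$, for $y\in J$ we write $y=g(x)$ and get
\[
h_t(y)=f_t(x)=f(x)+\int_0^t \1_{(y_0,f_s(x))}(D_s)\,\dd[D]_s
= h(y)+\int_0^t \1_{(y_0,h_s(y))}(D_s)\,\dd[D]_s .
\]
This is precisely the flow equation for initial uniformization $h$ and target $z_0$, whose image is $h(z_0)=y_0$. By the uniqueness in condition (i) of Lemma \ref{lemmatech}, $(h_t)$ is therefore the flow associated with $D$ for these data. In particular the explosion times match, $T^{h}(y)=T^{f}(g^{-1}(y))$, so the domain of $h_t$ is $J_t=g(I_t)$. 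We also have $h_t(Y_t)=f_t(X_t)=D_t$, and since $X_t\in\overline I_t$, we get $Y_t\in\overline{J_t}$.

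Next, we will check that $Y$ is admissible at $z_0$ for $\rho$. Because $g$ is a $\mathcal C^1$ increasing diffeomorphism with $g'=\rho\circ g/\lambda$, the stretching formula from the introduction gives $L_Y(t,g(x))=g'(x)L_X(t,x)$, which is bicontinuous. It follows that $\rho(g(x))-L_Y(t,g(x))=g'(x)\,\lambda_t(x)$, so the remaining capacity of $Y$ is $g(\lambda_t)$. Resistances are preserved: $R_{\rho_t}(g(x),g(y))=R_{\lambda_t}(x,y)$. Hence the disconnection times of $Y$ from $z_0$ coincide with those of $X$ from $x_0$, the intervals correspond as $g(I_t)$, and admissibility of $\rho_t$ on $g(I_t)$ follows from that of $\lambda_t$ on $I_t$. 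Quadratic variation also stays strictly increasing, since $\dd[Y]=g'(X)^2\dd[X]$ by It\^o's formula (or directly, since $g$ is $\mathcal C^1$).

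Finally, we will confirm that $(h_t)$ is the stretching flow of $Y$ defined by (\ref{floweq}) and not merely a solution of (\ref{floweq2}). One route is to invoke the converse computation in the proof of Proposition \ref{characterization}. The alternative is to compute directly, substituting $\rho_s(Y_s)=g'(X_s)\lambda_s(X_s)$ and $\dd[Y]_s=g'(X_s)^2\dd[X]_s$ into (\ref{floweq}) for $Y$. The $g'$ factors cancel, and we recover (\ref{floweq}) for $X$ evaluated at $g^{-1}(y)$, which is $f_t(g^{-1}(y))$. Uniqueness of the process driven by $D$ then follows from Proposition \ref{characterization}.

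The only mildly delicate point is boundary targets $x_0\in\partial I$ and values $D_t=\pm\infty$. We will handle these as in the earlier proofs, using one-sided indicators and the convention that stretchings extend to the compactified intervals.
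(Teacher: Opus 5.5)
Your proposal is correct and follows the paper's argument. You substitute $y=g(x)$ into (\ref{floweq2}) to see that $f_t\circ g^{-1}$ solves the flow equation with initial datum $h$ and anchor $y_0=h(z_0)$, and then conclude by the uniqueness in Proposition \ref{characterization} that the process driven by $D$ is $(f_t\circ g^{-1})^{-1}(D_t)=g(X_t)$. Your extra direct checks are also correct: admissibility of $g(X)$ via $L_{g(X)}(t,g(x))=g'(x)L_X(t,x)$, and the cancellation of the $g'$ factors in (\ref{floweq}). They simply make explicit what the paper leaves to the synthesis step of Proposition \ref{characterization}.
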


\begin{proof}
For $y\in J$, with $y_0 = f(x_0)=h(z_0)$, the flow equation for $(f_t)$ gives
$$f_t(g^{-1}(y))=h(y)+\int_0^t\1_{(y_0,f_s(g^{-1}(y)))}(D_s)\dd[D]_s.$$
The flow $(f_t\circ g^{-1})_{0\leq t \leq T(D)}$ is solution to the flow equations (\ref{floweq2}) driven by $D$ and with initial stretching $f\circ g^{-1} = h$, so by Proposition \ref{characterization} it is the unique such stretching flow, and the unique corresponding admissible process driven by $D$ is then $Y_t = (f_t\circ g^{-1})^{-1}(D_t)= g(X_t)$.
\end{proof}

\subsection{The Brownian burglars and their Drivers}
The previous discussions and in particular Proposition \ref{characterization} explain how to define an admissible process on any admissible capacity $\lambda$ via its driver. An important family of admissible drivers for our work will be the following.

\begin{definition}[The burglar Drivers] \label{def:burglar_driver}
Let $\nu\in\R$, and let $z,z_0\in\overline\R=\R\cup\{\pm\infty\}$. The \emph{burglar Driver} of parameter $\nu$, starting from $z$ and targeting $z_0$, is the continuous diffusion on $\overline \R$ which satisfies $D_0 = z$, evolves according to the SDE
\begin{equation}
\label{eq:fastdriver}
\dd D_t = \exp\left(\cosh(D_t)\right)\dd B_t+ \exp\left(2\cosh(D_t)\right)(1-\nu)\sign(D_t-z_0)\dd t.
\end{equation}
for $B$ a usual Brownian motion, and is instantly reflected at the boundaries $\pm\infty$. In the case of a boundary target $z_0=\pm\infty$, we stop it at the time $T(D):=\inf\{t\geq 0, D_t = z_0\}$, and otherwise let it run until $T(D)=+\infty$, which corresponds to the final time of the driver $D$ for the target $z_0$ as defined in Lemma \ref{lemmatech}.
\end{definition}

Notice that this SDE is a time-change of the following
\begin{equation}\label{eq:uniform-driver}
\dd D_t=\dd B_t+(1-\nu)\sign(D_t-z_0)\dd t.
\end{equation}
The point of this time-change is that for the SDE (\ref{eq:fastdriver}), the Feller classification of the boundaries is the following : 
when $z_0\in\R$, the boundaries $\pm\infty$ are Feller-regular for all $\nu<1$, and are entrance boundaries for all $\nu\geq 1$ ; and if $z_0=\pm\infty$ is a boundary target, then it is an entrance boundary for $\nu\leq 1$ and a regular boundary for $\nu >1$ (while the other boundary still has the same classification as in the interior-target case). Therefore, the SDE~(\ref{eq:fastdriver}) indeed always has a unique strong solution reflected at the boundaries $\pm\infty$.
Since we consider the drivers up to time reparametrization, it is then natural to define them on this faster time-scale.
This can be summarized by the following table : 

\begin{table}[h]
    \centering
    \begin{tabular}{lcc}
    \hline
    \textbf{$\nu\in\R$} & \textbf{Boundary $\pm\infty$ different from $z_0$} & \textbf{Boundary target $z_0=\pm\infty$} \\
    \hline
    $\nu > 1$ & Entrance & Regular \\
    $\nu = 1$ & Entrance & Entrance \\
    $\nu < 1$ & Regular & Entrance \\
    \hline
    \end{tabular}
    \caption{Feller classification of the boundaries $\pm\infty$ as a function of $\nu$}
    \label{tab:feller_classification}
    \end{table}

In the SDE (\ref{eq:uniform-driver}), the drivers of $\nu<1$ almost surely diverge to $\pm \infty$, and with this parametrization they take infinite time to do so. As a consequence, it was already noticed in \cite{AHS,Lupu_2021} that with such a parametrization, the corresponding burglar processes can only be defined up to an almost surely finite stopping time, while the SDE (\ref{eq:fastdriver}) allows the driver path to be well-defined for a longer ``intrinsic time". We will even show later that in some sense, Definition \ref{def:burglar_driver} is actually maximal in time, i.e. these drivers define an admissible process that could not last any longer than this.

\ms

Since the drivers are defined up to time-reparametrization, for simplicity of the calculations in the rest of the paper we will work with the time-parametrization of (\ref{eq:uniform-driver}), but it is straightforward that all the corresponding results and definitions also hold in the time-prametrization of (\ref{eq:fastdriver}).
\ms

As suggested by their name, the burglar drivers are destined to be understood as driver processes. In particular, they are indeed admissible drivers by the work of \cite{bass1998stochasticbifurcationmodels, HU2000287}, and their final time $T(D)$ is always $+\infty$ except in the case explicitly mentioned in  Definition \ref{def:burglar_driver}.

\ms

We now define the central objects of the article, the general Brownian burglars, via their driver processes thanks to Proposition \ref{characterization}.

\begin{definition}[The Brownian burglar] \label{def:general_burglar}
Let $\lambda_0$ be an admissible capacity on an interval $I$, and let $x,x_0\in\overline I$. Let
$g$ be any stretching morphism from $\lambda_0$ to the constant capacity $1$, and set $z_0=g(x_0)\in\overline\R$. The Brownian burglar $X$ of parameter $\nu$ on $\lambda_0$, starting from $x$ and targeting $x_0$, is defined (up to time-change) as the unique admissible process at $x_0$ for $\lambda_0$, driven by $D$ the burglar Driver of parameter $\nu$ started from $g(x)$ and targeting $g(x_0)$, for the initial stretching $g$.
\end{definition}

In order to verify that the Brownian burglar is well-defined, we need to check that its law does not depend on the choice of initial stretching morphism $g$. Any other stretching morphism from $\lambda_0$ to the constant capacity is of the form $\hat g=g+c$, for $c\in\R$. Up to time-change, the corresponding flow and driver are then $\hat g_t=g_t+c$ and $\hat D_t=D_t+c$. Equation~(\ref{eq:uniform-driver}) is translation invariant and
$$\hat g_t^{-1}(\hat D_t)=(g_t+c)^{-1}(D_t+c)=g_t^{-1}(D_t),$$
therefore the law of the burglar is well-defined.

\begin{remark}
  In the case $x=x_0\in\partial I$, we have defined the corresponding burglar driver to be trivial (it has final time 0), and thus the burglar is also trivial and killed instantly. For $\nu>0$, one can verify by choosing instead a starting point $y\neq x_0$ and doing $y\to x_0$ that the limiting process is indeed trivial. However, for $\nu \leq 0$, it is not the case, so that one could actually define a non-trivial burglar targeting $x_0\in\partial I$ and started from this target $x_0$, by taking the limit as $y\to x_0$. Another way to define this could be to consider the stretching flow (\ref{floweq2}) centered instead at another point $z$ that $z_0=\pm\infty$ in this case, which would allow a definition until $z$ gets swallowed, and then letting $z\to z_0$.
\end{remark}

\section{Properties and characterization of the burglars}
\label{properties}

We are going to give several important properties of the burglars defined through these drivers. We can already find some restricted versions of some of these properties in \cite{AHS} via the definition through disintegration of a Brownian motion or via a Bass-Burdzy flow, and it is quite easy to adapt most of the proofs to our setup.

\subsection{Stretching covariance}

Let $\lambda$ be an admissible capacity on an interval $I$, and let $x,x_0 \in \overline{I}$. Let $X$ be a Brownian burglar of intensity $\nu \in\R$ and initial capacity $\lambda$, starting from $x$ and targeting $x_0$. Let $g$ be a stretching morphism from $\lambda$ to an other capacity $\rho$ on some interval $J$.

\begin{proposition}[Stretching covariance]
The process $(g(X_t))_{t\geq 0}$ is (up to time-change) a Brownian burglar of intensity $\nu$ and initial capacity $\rho$, starting from $g(x)$ and targeting $g(x_0)$.
\end{proposition}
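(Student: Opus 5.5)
The plan is to reduce the statement directly to Proposition \ref{transfprocess}, since the burglar is defined through its driver and stretching acts on the initial uniformization only. First I would fix a stretching morphism $h$ from $\rho$ to the constant capacity $1$ on $\R$ and set $f := h\circ g$. Since stretching morphisms compose (the groupoid structure noted earlier, via $R_\lambda(x,y)=R_\rho(g(x),g(y))=R_1(h(g(x)),h(g(y)))$), $f$ is a stretching morphism from $\lambda$ to the constant capacity $1$. By Definition \ref{def:general_burglar}, and because its law does not depend on the choice of initial uniformization (the translation-invariance check following that definition), I may realize $X$ as the admissible process at $x_0$ for $\lambda$ driven by the burglar driver $D$ of parameter $\nu$, started from $f(x)$ and targeting $y_0 := f(x_0)$, for the initial stretching $f$.

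Next I would apply Proposition \ref{transfprocess} with these choices of $g$, $h$, $f$, $z_0 = g(x_0)$ and $y_0 = h(z_0)$. It says that $(g(X_t))$ is, up to time-change, the unique admissible process at $g(x_0)$ for $\rho$ driven by the same $D$ for the initial uniformization $h$. Now $D$ is the burglar driver of parameter $\nu$ started from $f(x) = h(g(x))$ and targeting $h(g(x_0))$. These are precisely the starting point and target required in Definition \ref{def:general_burglar} for the burglar on $\rho$ started from $g(x)$, targeting $g(x_0)$, and computed with the uniformization $h$. So $g(X)$ is that burglar. This identification holds pathwise for each realization of $D$, hence also in law.

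The remaining points are bookkeeping, and I expect the main (mild) obstacle to be boundary cases. When $x_0$ or $x$ lies in $\partial I$, I need the extension of $g$ to $\overline I \to \overline J$, which maps endpoints to endpoints because the capacities are admissible. This ensures $g(x_0)\in \partial J$ exactly when $x_0\in\partial I$, so the final time $T(D)$, which is defined in terms of the target $y_0=\pm\infty$, is the same on both sides. I would also note that the statement says ``capacity $\rho$'' without specifying admissibility, but admissibility of $\rho$ follows automatically from that of $\lambda$, since $g$ preserves resistances and maps endpoints to endpoints. Finally, the equality holds up to time-change, consistent with the convention that both the burglar and Proposition \ref{transfprocess} are only defined up to time-reparametrization.
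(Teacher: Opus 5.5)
Your proposal is correct and follows the same route as the paper, which proves the statement in one line as a direct application of Proposition \ref{transfprocess} to the burglars and their drivers. The details you add (choosing the uniformization $f=h\circ g$, using the independence of the burglar's law from the choice of uniformization, and matching the driver's start point $h(g(x))$ and target $h(g(g_0))$ with Definition \ref{def:general_burglar} on $\rho$) are exactly the bookkeeping that the paper leaves implicit.
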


\begin{proof}
This is exactly the application of Proposition \ref{transfprocess} to the Brownian burglars and their drivers.
\end{proof}

\subsection{Symmetry}

It is direct from (\ref{eq:uniform-driver}) that if the burglar driver $D$ targets $z_0$, then $-D$ is a burglar driver targeting $-z_0$. By characterization of processes by their drivers, we obtain the following symmetry property for the corresponding burglars :

\begin{proposition}[Symmetry of the burglars laws]
Let $\lambda$ be an admissible capacity on an interval $I$, and let $\tilde \lambda$ be the symmetric capacity, i.e.\ $\tilde \lambda (x)= \lambda(-x)$ for $x\in-I$. Let $x,x_0 \in \overline{I}$. Let $X$ be a Brownian burglar of intensity $\nu \in\R$ and initial capacity $\lambda$, starting from $x$ and targeting $x_0$. Then $(-X_t)_{t\geq 0}$ has the law of a Brownian burglar of intensity $\nu$, initial capacity $\tilde \lambda$, starting from $-x$ and targeting $-x_0$.
\end{proposition}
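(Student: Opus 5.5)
The plan is to apply the reflection $x\mapsto -x$ to everything: the capacity, the stretching flow and the driver. We then check that the reflected objects satisfy the flow equation (\ref{floweq2}) with the reflected driver, and conclude by the uniqueness in Proposition \ref{characterization}.

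First, the initial uniformization. Let $g$ be a stretching morphism from $\lambda$ to the constant capacity $1$. Define $\tilde g(y) := -g(-y)$ for $y\in -I$. Then $\tilde g'(y) = g'(-y) = 1/\lambda(-y) = 1/\tilde\lambda(y)$, and $\tilde g$ is increasing. So $\tilde g$ is a stretching morphism from $\tilde\lambda$ (on the interval $-I$, which is admissible since the resistances are simply swapped) to the constant capacity $1$. Moreover $\tilde g(-x_0) = -z_0$ and $\tilde g(-x) = -g(x)$.

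Second, the driver. Let $D$ be the burglar driver started from $g(x)$ and targeting $z_0$ that drives $X$. By (\ref{eq:uniform-driver}), $\tilde D := -D$ satisfies
\[
\dd\tilde D_t = \dd(-B)_t + (1-\nu)\sign(\tilde D_t + z_0)\dd t .
\]
Since $-B$ is a Brownian motion, $\tilde D$ is a burglar driver of parameter $\nu$ started from $-g(x)$ and targeting $-z_0$. The reflection at $\pm\infty$ and the killing time at a boundary target are exchanged consistently, so the final times agree: $T(\tilde D) = T(D)$.

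Third, the flow. Set $\tilde g_t(y) := -g_t(-y)$, where $(g_t)$ is the stretching flow of $X$. Using (\ref{floweq2}) for $g_t$,
\[
\tilde g_t(y) = \tilde g(y) - \int_0^t \1_{(z_0,\,g_s(-y))}(D_s)\dd[D]_s .
\]
The signed indicator satisfies $\1_{(a,b)}(u) = -\1_{(-a,-b)}(-u)$ and $[\tilde D] = [D]$. Hence
\[
\tilde g_t(y) = \tilde g(y) + \int_0^t \1_{(-z_0,\,\tilde g_s(y))}(\tilde D_s)\dd[\tilde D]_s ,
\]
which is exactly (\ref{floweq2}) for the driver $\tilde D$, target $-z_0$ and initial stretching $\tilde g$. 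By the uniqueness part of Lemma \ref{lemmatech}(i) (equivalently, of Proposition \ref{characterization}), $(\tilde g_t)$ is the stretching flow of the process driven by $\tilde D$. That process is therefore
\[
\tilde g_t^{-1}(\tilde D_t) = -g_t^{-1}(D_t) = -X_t .
\]
It remains to note that $-X$ is admissible at $-x_0$ for $\tilde\lambda$: its local time is $L_{-X}(t,y) = L_X(t,-y)$ and its quadratic variation is unchanged. By Definition \ref{def:general_burglar}, $-X$ is then a burglar of intensity $\nu$ on $\tilde\lambda$, from $-x$, targeting $-x_0$. Independence from the choice of $g$ was already established, so taking laws concludes.

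The main step to handle with care is the sign bookkeeping in the signed indicator, together with the boundary-target case $z_0=\pm\infty$. In that case one must check that the entrance/regular classification in Table \ref{tab:feller_classification} and the killing rule are mapped onto each other by $D\mapsto -D$; this holds because the table is symmetric under exchanging $+\infty$ and $-\infty$. Otherwise the argument is a direct transport of structure.
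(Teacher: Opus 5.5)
Your proof is correct and follows the paper's route: the paper just notes from (\ref{eq:uniform-driver}) that $-D$ is a burglar driver targeting $-z_0$ and invokes the characterization of processes by their drivers. Your check that $y\mapsto -g_t(-y)$ solves (\ref{floweq2}) for $-D$, giving $\tilde g_t^{-1}(-D_t)=-X_t$, is exactly the step the paper leaves implicit. For the boundary behaviour, the cleanest justification is not the symmetry of the Feller table but the direct fact that (\ref{eq:fastdriver}) is invariant under $(D,B,z_0)\mapsto(-D,-B,-z_0)$, since $\cosh$ is even.
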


\subsection{Local Time Strong Markov property}

Let $\lambda_0$ be an admissible capacity on an interval $ I$, and let $x_0 \in \overline{I}$. Let $(X_t)_{0 \leq t \leq T(X)}$ be a Brownian burglar of intensity $\nu \in\R$ and initial capacity $\lambda_0$, targeting $x_0$.

\begin{theorem}[Local-Time Strong Markov Property] \label{thm:LT_Markov}
Let $T$ be a stopping time for the natural filtration of $X$. Conditionally on $X_T$ and the remaining available capacity $\lambda_T$, the process $(X_{T+t})_{0 \leq t \leq T(X)-T}$ has the law of a Brownian burglar of intensity $\nu$ with initial capacity $\lambda_T$ on the interval $I_T$, starting from $X_T$ and targeting $x_0$, independently from $\F_T$.
\end{theorem}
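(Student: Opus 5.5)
The plan is to transfer the question to the driver and use the strong Markov property of the diffusion $D$ solving (\ref{eq:uniform-driver}). Fix an initial stretching $g_0$ from $\lambda_0$ to the constant capacity $1$, set $z_0=g_0(x_0)$, and let $D$ be the burglar driver targeting $z_0$ and $(g_t)$ the associated stretching flow, so that $X_t=g_t^{-1}(D_t)$.

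\textbf{Step 1: transfer the stopping time.} By Proposition \ref{characterization}, $X$ and $D$ determine each other pathwise up to time-change. Hence the natural filtrations of $X$ and of $D$, taken together with the flow $(g_t)$, coincide after the corresponding time-change. A stopping time $T$ for $X$ is therefore a stopping time for the filtration of $D$, and $X_T$, $\lambda_T$ and $g_T$ are all $\F_T$-measurable.

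\textbf{Step 2: apply the strong Markov property of $D$.} The SDE (\ref{eq:uniform-driver}) is time-homogeneous, and it is equivalent to (\ref{eq:fastdriver}) with reflection at $\pm\infty$, which has a unique strong solution. So $D$ is a strong Markov process. Conditionally on $\F_T$, the shifted process $\hat D_t:=D_{T+t}$ is then a burglar driver of parameter $\nu$, started from $D_T$ and targeting the same point $z_0$, and it is independent of $\F_T$.

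\textbf{Step 3: identify the shifted flow.} Define $\hat g_t:=g_{T+t}$ on $I_{T+t}$. Subtracting (\ref{floweq2}) at time $T$ from (\ref{floweq2}) at time $T+t$ gives
$$\hat g_t(x)=g_T(x)+\int_0^t\1_{(z_0,\hat g_s(x))}(\hat D_s)\dd[\hat D]_s,\qquad x\in I_T.$$
So $(\hat g_t)$ solves the flow equation driven by $\hat D$, with initial uniformization $g_T$ and target $z_0$. By Proposition \ref{stretchflow}, $g_T$ is a stretching morphism from $\lambda_T$ on $I_T$ to the constant capacity $1$, and $g_T(x_0)=z_0$. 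By the uniqueness part of Lemma \ref{lemmatech} and Proposition \ref{characterization}, the process $\hat g_t^{-1}(\hat D_t)=X_{T+t}$ is therefore the unique admissible process at $x_0$ for $\lambda_T$ driven by $\hat D$ with initial stretching $g_T$. This is, by Definition \ref{def:general_burglar}, a burglar on $\lambda_T$ started from $g_T^{-1}(D_T)=X_T$ and targeting $x_0$. Since Definition \ref{def:general_burglar} does not depend on the choice of stretching, the conditional law depends only on $(X_T,\lambda_T)$, and independence from $\F_T$ follows from Step 2.

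\textbf{Expected main obstacle: boundary and degenerate cases.} The delicate part is Step 1 together with the cases where $X_T$ or the target sits on the boundary. If $x_0\in\partial I$, the identity $g_T(x_0)=z_0$ should be read as $z_0=\pm\infty$, consistent with the normalization in Proposition \ref{stretchflow}, which fixes the translation through $g_0$. If $D_T=\pm\infty$, one has to check that the reflected diffusion restarted from an entrance or regular boundary matches the definition of the burglar started from $X_T\in\partial I_T$, including the trivial case $X_T=x_0\in\partial I_T$, where $T(D)=T$. Making rigorous the equality of filtrations under the time-change between (\ref{eq:fastdriver}) and (\ref{eq:uniform-driver}) is routine but must be stated carefully.
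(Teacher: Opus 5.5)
Your proposal is correct and follows essentially the same route as the paper. Both arguments identify the filtrations of $X$ and $D$ via Proposition \ref{characterization}, apply the strong Markov property of the driver, and observe that $(g_{T+t})$ is the stretching flow of $(X_{T+t})$ started from $g_T$, with $g_T(x_0)=z_0$, so that $X_{T+t}=g_{T+t}^{-1}(D_{T+t})$ is the burglar on $\lambda_T$ from $X_T$ to $x_0$. Your explicit check that the shifted flow solves (\ref{floweq2}) from $g_T$ is a useful detail the paper only asserts. One small correction: invoke the strong Markov property for the reflected diffusion (\ref{eq:fastdriver}) itself, not as something "equivalent" to (\ref{eq:uniform-driver}), since for $\nu<1$ the parametrization (\ref{eq:uniform-driver}) takes infinite time to reach $\pm\infty$ and does not see the reflection there.
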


\begin{proof}
Let $g_0$ be a stretching morphism from $\lambda_0$ to the capacity $1$, set $z_0=g_0(x_0)$, let $(D_t)_{0 \leq t \leq T(X)}$ be the corresponding driver of $X$ at $x_0$, and let $(g_t)_{0 \leq t \leq T(X)}$ be its stretching flow. By Proposition \ref{characterization}, $D$ and $X$ determine one another, so their natural filtrations coincide. The process $(X_{T+t})_{0 \leq t \leq T(X)-T}$ is admissible at $x_0$ for $\lambda_T$ on $I_T$. Its stretching flow started from $g_T$ is $(g_{T+t})_{0 \leq t \leq T(X)-T}$, and its driver is $(D_{T+t})_{0 \leq t \leq T(X)-T}$. Since $D$ is defined via (\ref{eq:fastdriver}), it verifies the strong Markov property, and conditionally on $D_T$, $(D_{T+t})_{0 \leq t \leq T(X)-T}$ is a burglar driver of parameter $\nu$ started from $D_T$ and targeting $z_0 = g_T(x_0)$, independently of $\F_T$. Therefore, $X_{T+t} = g_{T+t}^{-1}(D_{T+t})$ is indeed a Brownian burglar of intensity $\nu$ with initial capacity $\lambda_T$ on the interval $I_T$, started from $X_T$ and targeting $x_0$, independently of $\F_T$ conditionally on $X_T$ and $\lambda_T$.
\end{proof}

\subsection{Side-locality}
\label{sidelocality}

Let $\lambda_0$ be an admissible capacity on an interval $I$, $x < x_1 < x_2 \in \overline I$.
Let $X^{(1)}$ and $X^{(2)}$ be two burglars of some intensity $\nu \in \R$, starting at $x$ with initial capacity $\lambda_0$ and targeting respectively $x_1$ and $x_2$. Let $\tau^{(i)}_{x_1} = \inf\{t\geq 0, X^{(i)}_t = x_1\}$, and let $T^{(i)}(x_1, x_2)$ be the usual disconnection time of $x_1$ and $x_2$ by the process $X^{(i)}$.

\begin{proposition}
\label{sideloc}
The stopped processes $\left( X^{(1)}_t \right)_{0 \le t \le \tau^{(1)}_{x_1}} $ and $ \left( X^{(2)}_t \right)_{0 \le t \le \tau^{(2)}_{x_1}}$ have same law, and the two burglars $X^{(1)}$ and $X^{(2)}$ can therefore be coupled before reaching $x_1$.
\end{proposition}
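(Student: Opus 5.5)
We fix one stretching morphism $g_0$ from $\lambda_0$ to the constant capacity $1$ and use it for both burglars. Set $z=g_0(x)$ and $z_i=g_0(x_i)$ for $i=1,2$, so $z<z_1<z_2$ in $\overline\R$. By Definition \ref{def:general_burglar}, $X^{(i)}$ is driven by a burglar driver $D^{(i)}$ started at $z$ and targeting $z_i$, through the flow (\ref{floweq2}) centered at $z_i$.

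The first obstacle is that the two flows $g^{(1)}_t$ and $g^{(2)}_t$ are anchored at different points, so the drivers are not literally comparable. To fix this, we re-express both processes with a common anchor. The stretching flow of an admissible process is determined, through (\ref{floweq}), by the process $X$ and the choice of $g_0$ together with the anchor. Changing the anchor from $x_i$ to $x_1$ changes $g_t$ only by an additive term that is constant in $x$:
$$\tilde g^{(2)}_t(y)=g^{(2)}_t(y)+\int_0^t\1_{(x_1,x_2)}(X^{(2)}_s)\,\frac{\dd[X^{(2)}]_s}{\lambda_s(X^{(2)}_s)^2}.$$
Before $\tau^{(2)}_{x_1}$ the process $X^{(2)}$ stays in $\{y<x_1\}$, so $\1_{(x_1,x_2)}(X^{(2)}_s)=0$ and $\tilde g^{(2)}_t=g^{(2)}_t$. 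Consequently, up to $\tau^{(2)}_{x_1}$ the flow of $X^{(2)}$ also solves (\ref{floweq2}) with anchor $z_1$, and the same holds trivially for $X^{(1)}$. The same observation shows that $g^{(i)}_t(x_1)=z_1$ up to that time, since the integrand $\1_{(x_i,x_1)}(X_s)$ vanishes while $X_s<x_1$. Hence the stopping time $\tau^{(i)}_{x_1}$ equals the hitting time of $z_1$ by $D^{(i)}$.

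With this, the law of the stopped driver can be compared directly. Before the driver reaches $z_1$ we have $D^{(i)}_t<z_1<z_2$, so $\sign(D^{(i)}_t-z_i)=-1$ for both $i$. By (\ref{eq:uniform-driver}), or (\ref{eq:fastdriver}) in the fast time-scale, both drivers are therefore Brownian motion with drift $-(1-\nu)$, possibly time-changed and reflected at $-\infty$, started from $z$ and stopped at the hitting time $\sigma_{z_1}$ of $z_1$. Uniqueness in law of the strong solution of the SDE on $(-\infty,z_1)$ then gives that $(D^{(1)}_{t\wedge\sigma_{z_1}})$ and $(D^{(2)}_{t\wedge\sigma_{z_1}})$ have the same law. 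In particular we may couple them to be equal up to $\sigma_{z_1}$, for instance by driving them with the same Brownian motion $B$ and pathwise uniqueness.

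It remains to transfer this identity from drivers to burglars. On the coupled event the two drivers coincide up to $\sigma_{z_1}$, and both flows solve the same equation (\ref{floweq2}) with anchor $z_1$ and the same initial condition $g_0$. Uniqueness of its solutions, condition (i) of Lemma \ref{lemmatech}, gives $g^{(1)}_t=g^{(2)}_t$ for $t\le\sigma_{z_1}$. Therefore $X^{(1)}_t=(g^{(1)}_t)^{-1}(D_t)=X^{(2)}_t$ up to $\tau_{x_1}$, which yields both the coupling and the equality in law, defined up to time-change.

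The delicate point is the case where $x_2$ or $x_1$ is a boundary point. There the centering of (\ref{floweq2}) at $z_2=+\infty$ is only defined through $g_0$, as in Proposition \ref{stretchflow}. We handle it as in the proof of Proposition \ref{characterization}, keeping only the one-sided indicators; these agree on $\{D<z_1\}$, so the argument above goes through unchanged.
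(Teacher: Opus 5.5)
Your proof is correct and follows essentially the same route as the paper. Both re-anchor the flow of $X^{(2)}$ at $x_1$ using the common initial stretching, observe that below $z_1$ both drivers solve the burglar SDE with drift $\nu-1$, and conclude via the characterization of processes by their drivers. The paper instead computes the full change-of-target drift $b(D,Z)$ through $D'^{(2)}_t=D^{(2)}_t+z_1-Y_t$, which it reuses for the $\nu=1/2$ target-independence, whereas you note directly that $Y_t=g^{(2)}_t(x_1)\equiv z_1$ before $\tau^{(2)}_{x_1}$, so the re-anchoring is trivial there. One minor slip: since $x<x_1<x_2$, only $x$ and $x_2$ can be boundary points, not $x_1$.
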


\begin{proof}
Let $g$ be a stretching morphism from $\lambda_0$ to the constant capacity $1$, and $z_1 =g(x_1)$, $z_2=g(x_2)$.
By definition, the driver $D^{(2)}$ (resp $D^{(1)}$) of $X^{(2)}$ (resp $X^{(1)}$) at $x_2$ (resp $x_1$), with initial stretching $g$ is the burglar driver of parameter $\nu$ at $z_2$ (resp $z_1$). Let us compute $D'^{(2)}$, the driver of $X^{(2)}$ with initial stretching $g$, but at the target $x_1$ instead (at which the process $X^{(2)}$ is admissible until the time $T^{(2)}(x_1, x_2)$). Let $(g_t^{(2)})$ be the stretching flow of $X^{(2)}$ at $x_2$ and $(g_t'^{(2)})$ at $x_1$. Then $g_t'^{(2)}\circ\left(g_t^{(2)}\right)^{-1}$ is the unique Möbius transform of the constant capacity $1$ (thus a translation) sending $Y_t := g_t^{(2)}(x_1)$ to $g_t'^{(2)}(x_1) = z_1$, therefore
$$ D'^{(2)}_t = g_t'^{(2)}(X^{(2)}_t) =  g_t'^{(2)}\left(\left(g_t^{(2)}\right)^{-1}(D^{(2)}_t)\right) = D^{(2)}_t +z_1 - Y_t.$$ 
By definition (equation~(\ref{floweq2})) and since $x_1<x_2$ and $g_t$ preserve the order, $Y_t<z_2$ and
$$\dd Y_t=\1_{(z_2,Y_t)}(D^{(2)}_t)\dd [D^{(2)}]_t = -\mathbf 1_{(Y_t,z_2)}(D^{(2)}_t)\dd [D^{(2)}]_t.$$
Combining this identity with the SDE of the burglar drivers (\ref{eq:uniform-driver}) gives, in the quadratic variation parametrization of the driver that we use for calculations (as discussed at the end of Section \ref{formalism})
$$\dd D_t'^{(2)}=\dd B_t+b(D_t'^{(2)},Z_t)\dd t,$$
where $Z_t := g_t'^{(2)}(x_2)$ and
\begin{equation}
\label{shiftloc}
b(D, Z) = \begin{cases} 
1-\nu & \text{if } D>Z,\\
\nu & \text{if } z_1<D<Z,\\
\nu-1 & \text{if } D<z_1.
\end{cases} 
\end{equation} 
Before $X^{(2)}$ reaches $x_1$, we have $X^{(2)} < x_1$ and thus $D_t'^{(2)}<z_1$. Its driver therefore has drift $\nu-1$, so it has exactly the same law as the burglar driver of $X^{(1)}$ below its target $z_1$. The characterization of processes by their drivers allows us to conclude.
\end{proof}

The ``change of target" drift (\ref{shiftloc}) implies a duality between the burglars of parameter $\nu$ and $1-\nu$, that can be stated in the following way via the side-locality : we have proved that at any point in space, the law of the burglar of parameter $\nu$ and initial capacity $\lambda_0$ only depends on which side of its target it is. Then, locally, a burglar of parameter $1-\nu$ with target on some given side has the law of a burglar of parameter $\nu$ with target on the opposite side.
The side-locality and this duality both echo and validate heuristically the formal SDE (\ref{heursde}).

At the critical intensity $\nu=1/2$ (which is the one at which Le Jan isomorphism states that the occupation time of the corresponding loop-soup has the law of a squared Brownian motion), since $\nu = 1-\nu$, a particularly nice behaviour that we shall now briefly discuss appears.

\subsection{Target-independence at critical intensity}

Notice in the previous proof that the intensity $\nu=1/2$ plays a special role: the two drifts in (\ref{shiftloc}) above $z_1$, the image of the new target $x_1$, are both equal at this critical intensity. The dependency on the image $Z_t$ of the old target $x_2$ completely vanishes, and $D'^{(2)}$ is exactly a burglar Driver of parameter $1/2$ targeting $z_1$ until the disconnection time $T(x_1,x_2)$. By the characterization by the driver, the burglars $X^{(1)}$ and $X^{(2)}$ have the same law until that time. Thus their local behaviour is target-independent at critical intensity.
This yields the following stronger target-independence proposition for the critical intensity $\nu=1/2$, allowing us to couple burglars until the later time $T(x_1,x_2)$.

\begin{proposition}[Target-independence at $\nu = 1/2$] \label{thm:locality}
At the critical intensity $\nu=1/2$, the laws of two burglars $X^{(1)}$ and $X^{(2)}$ targeting respectively $x_1$ and $x_2$, with same initial capacity $\lambda$ and starting point $x$,  are identical until their disconnection time $T(x_1, x_2)$ :
$$\left( X^{(1)}_t \right)_{0 \le t \le T^{(1)}(x_1, x_2)} \overset{(d)}{=} \left( X^{(2)}_t \right)_{0 \le t \le T^{(2)}(x_1, x_2)}.$$
\end{proposition}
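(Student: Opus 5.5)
The plan is to rerun the computation from the proof of Proposition~\ref{sideloc} and specialize it to $\nu=1/2$. Fix a stretching morphism $g$ from $\lambda$ to the constant capacity $1$, and set $z_1=g(x_1)$, $z_2=g(x_2)$. By the symmetry proposition it suffices to treat the case $x_1<x_2$, with the starting point $x$ arbitrary in $\overline I$.

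First, let $D^{(2)}$ be the burglar driver of $X^{(2)}$ targeting $z_2$, with flow $(g^{(2)}_t)$. Let $D'^{(2)}$ be the driver of the same process $X^{(2)}$ seen from the target $x_1$. This second driver is well defined up to $T^{(2)}(x_1,x_2)$, since until then $\lambda_t$ is admissible on the interval $I_t$ relative to $x_1$, and this interval coincides with the one relative to $x_2$. As in the proof of Proposition~\ref{sideloc}, the two flows differ by a translation, so
\[
D'^{(2)}_t=D^{(2)}_t+z_1-Y_t,\qquad Y_t=g^{(2)}_t(x_1).
\]
Here $Y$ has finite variation and $\dd Y_t=-\mathbf 1_{(Y_t,z_2)}(D^{(2)}_t)\dd[D^{(2)}]_t$. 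In the quadratic variation parametrization this yields
\[
\dd D'^{(2)}_t=\dd B_t+b(D'^{(2)}_t,Z_t)\dd t,
\]
with drift $b$ given by (\ref{shiftloc}). At $\nu=1/2$ we have $1-\nu=\nu=1/2$, so
\[
b(D,Z)=\tfrac12\,\sign(D-z_1),
\]
and the dependence on $Z_t=g'^{(2)}_t(x_2)$ disappears. Hence $D'^{(2)}$ solves, up to time $T^{(2)}(x_1,x_2)$, the SDE (\ref{eq:uniform-driver}) with target $z_1$ and starting point $g(x)$. By pathwise uniqueness for this SDE (a Brownian motion with bounded drift), it has the law of the burglar driver of $X^{(1)}$, stopped at the corresponding time.

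Next, I would identify the stopping times on the two sides. The disconnection time $T(x_1,x_2)$ must be expressed through the driver at $x_1$ and the flow it generates through (\ref{floweq2}). It is the first time $t$ at which $g'_t(x_2)=+\infty$, that is, the first time the $x_1$-flow started from $g$ explodes at $x_2$. This is a measurable functional of the driver path, and the same functional applied to $D^{(1)}$ gives $T^{(1)}(x_1,x_2)$. Therefore the stopped drivers
\[
\bigl(D'^{(2)}_t\bigr)_{t\le T^{(2)}(x_1,x_2)} \quad\text{and}\quad \bigl(D^{(1)}_t\bigr)_{t\le T^{(1)}(x_1,x_2)}
\]
have the same law.

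Finally, Proposition~\ref{characterization} applies with target $x_1$ and initial stretching $g$: the process $X^{(2)}$ is the unique admissible process driven by $D'^{(2)}$, and $X^{(1)}$ is the one driven by $D^{(1)}$. The map from driver to process is deterministic, given by $X_t=g_t^{-1}(D_t)$ with $g_t$ solving (\ref{floweq2}), and it is adapted. Equality in law of the stopped drivers therefore transfers to equality in law of the stopped burglars, up to time change.

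The main obstacle is justifying the change-of-target driver computation all the way up to $T(x_1,x_2)$, rather than only up to the hitting time of $x_1$ as in Proposition~\ref{sideloc}. One needs $D'^{(2)}$ to remain an admissible driver, and the semimartingale decomposition to hold even while $D'^{(2)}$ crosses $z_1$ and accumulates local time there. I would handle this by localizing on the stopping times at which $Z_t-z_1$ stays bounded away from $0$ and $\infty$ and all quantities stay finite, and then letting these times increase to $T(x_1,x_2)$. The uniqueness-in-law statement holds on each localized interval and passes to the limit. A smaller check is that the exceptional case where $x$ lies between $x_1$ and $x_2$, or equals one of them, is covered by the same computation.
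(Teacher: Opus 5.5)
Your proposal is the paper's argument. You rerun the change-of-target computation from the proof of Proposition~\ref{sideloc} and note that at $\nu=1/2$ the drift (\ref{shiftloc}) collapses to $\tfrac12\sign(D-z_1)$, so $D'^{(2)}$ is a burglar driver targeting $z_1$ up to $T(x_1,x_2)$; you then transfer this to the burglars via Proposition~\ref{characterization}. Your identification of $T(x_1,x_2)$ as the explosion time of the $x_1$-flow at $x_2$, which makes it a functional of the driver, is a useful detail the paper leaves implicit.

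One caution on your localization step. In the quadratic-variation parametrization the driver already explodes whenever the capacity is exhausted at a point outside $[x_1,x_2]$. At $\nu=1/2$ this happens with positive probability before $T(x_1,x_2)$, so localizing times on which ``all quantities stay finite'' do not in general increase to $T(x_1,x_2)$. You should instead work in the parametrization (\ref{eq:fastdriver}) and use that $D'^{(2)}-D^{(2)}=z_1-Y_t$ stays finite before $T(x_1,x_2)$, so both drivers hit and reflect at $\pm\infty$ at the same times. The paper glosses over this point in the same way.
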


An immediate but useful consequence of this fact is that at this critical intensity, it is possible to couple two burglars targeting two different points until the exploration exhausts all local time available at some point between the two targets. This is yet another instance of the particularly tractable behaviour of the cable-graph loop-soup at critical intensity already enlightened by Le Jan isomorphism \cite{LeJan2011}, Lupu's explicit computations for connection probabilities \cite{Lupu2016LoopClustersInterlacementFreeField} and Werner's recent switching property \cite{werner2025switchingidentitycablegraphloop}.

\subsection{Link with the (skewed) Bass-Burdzy flow}
\label{bassburdzy}

Let $\beta_1$, $\beta_2\in \R$, $\sigma>0$ and $\gamma$ be a standard Brownian motion. Following \cite{bass1998stochasticbifurcationmodels} and the notations of \cite{AHS}, one defines the Bass-Burdzy flow of parameters $(\beta_1,\beta_2)$ and diffusivity $\sigma$ as the collection of homeomorphisms of the real line $({\mathcal R}_t,\,t\ge 0)$ such that for any $x\in \R$, the process $({\mathcal R}_t(x),\, t\ge 0)$ is the strong solution of the SDE
$$
{\mathcal R}_t(x) := x+ \sigma \gamma_t + \beta_1\int_0^t \mathbf 1_{\{{\mathcal R}_s(x)< 0\}} \dd s +\beta_2\int_0^t \mathbf 1_{\{{\mathcal R}_s(x)> 0\}} \dd s.
$$
When $\sigma=1$, we just call it Bass--Burdzy flow with parameters $(\beta_1,\beta_2)$.

\ms

Warren first showed in \cite{MR2197114} that this Bass-Burdzy flow arise from the study of the local times of the burglar defined in \cite{SPS_1998__32__328_0} (corresponding to the burglars of intensity $\nu=0$ in our work). Let us explain how this flow appears in the general case.

\ms

Let $X$ be a burglar of parameter $\nu$ on the constant capacity $1$ on $\R$, started from 0 and targeting $x_0\in\overline \R= \R \cup \{\pm \infty \}$. Let $(g_t)_{0 \leq t \leq T(X)}$ be the stretching flow of $X$ at the target $x_0$ started from $g_0 = \text{id}_\R$, and $(D_t)_{0 \leq t \leq T(X)}$ the corresponding driver (so that $D_t$ takes values in $\overline \R$, as detailed at the end of Section \ref{formalism}).

Let us parametrize everything to the quadratic variation of the driver $D$, i.e. consider $\xi(t) = \inf\{s\geq 0, [D]_s > t\} $. Note that here it is possible that the quadratic variation $[D]_t$ diverges in finite time (and it will indeed almost surely be the case if and only if $\nu<1$), such that in this case $\xi(t) \to t_{\text{max}} < \infty$ as $t\to \infty$. Therefore, with this time-parametrization, we lose all the information on the geometric paths of the drivers and processes, and on the flows after time $t_{\text{max}}$, and we can only define the processes until this almost surely finite stopping time. Set $\tilde g_t = g_{\xi(t)}$, $\tilde X_t = X_{\xi(t)}$, and $\tilde D_t = D_{\xi(t)}$.

Let us consider the tip-centered stretching flow $(R_t(x))_{t \geq 0, x \in \R} := (\tilde g_t(x) - \tilde D_t)_{t \geq 0, x \in \R}$. Defined this way, $R_t$ is the unique stretching morphism from the capacity $\tilde \lambda_t$ to the constant capacity 1, sending the point $\tilde X_t$ to 0. Then, this flow $(R_t(x))_{t \geq 0, x \in \R} $ is a ``skewed Bass-Burdzy flow'' of parameters $(\nu-1,\nu)$ above $x_0$ and $(-\nu,1-\nu)$ below. 
Indeed, by (\ref{eq:uniform-driver}), the burglar driver in its quadratic variation parametrization satisfies
$$ \dd \tilde D_t = \dd \tilde\gamma_t + (1-\nu)\sign(\tilde D_t-x_0) \dd t $$
for some Brownian motion $\tilde\gamma$. Moreover, the uniform flow equation (\ref{floweq2}) reads
$$ \tilde g_t(x)=g_0(x)+\int_0^t\1_{(x_0,\tilde g_s(x))}(\tilde D_s)\dd [\tilde D]_s = x+\int_0^t\1_{(x_0-\tilde D_s,R_s(x))}(0) \dd t. $$
Therefore, the flow $(R_t(x))$ verifies : 
\begin{eqnarray*}
R_t(x) = \tilde g_t(x) - \tilde D_t &=& x - \tilde \gamma_t + \int_0^t \left[ (\nu-1) + \mathbf 1_{\{R_s(x)>0\}} \right] \mathbf 1_{\{\tilde D_s>x_0\}} \dd s \\
&& - \int_0^t \left[ (\nu-1) + \mathbf 1_{\{R_s(x)<0\}} \right] \mathbf 1_{\{\tilde D_s<x_0\}} \dd s
\end{eqnarray*}
Notice that since $\tilde D_t = x_0-R_t(x_0)$, so this can also be written in the following way :

\begin{proposition}
The tip-centered stretching flow $(R_t(x))_{t \geq 0, x \in \R} = (\tilde g_t(x)-\tilde D_t)_{t \geq 0, x \in \R} $ is a ``skewed Bass-Burdzy flow'' at $x_0$, of parameters $(\nu-1,\nu)$ and $(-\nu,1-\nu)$, i.e.\
\begin{align*}
R_t(x) &= x - \gamma_t + (\nu-1)\int_0^t \mathbf 1_{\{R_s(x)<0,R_s(x_0)<0\}}\dd s + \nu\int_0^t \mathbf 1_{\{R_s(x)>0,R_s(x_0)<0\}}\dd s\\
&\quad -\nu\int_0^t \mathbf 1_{\{R_s(x)<0,R_s(x_0)>0\}}\dd s + (1-\nu)\int_0^t \mathbf 1_{\{R_s(x)>0,R_s(x_0)>0\}}\dd s.
\end{align*}
\end{proposition}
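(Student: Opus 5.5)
The plan is to derive the stated identity directly from the two equations already written in the quadratic-variation parametrization of the driver. The first is the driver SDE $\dd \tilde D_t = \dd\tilde\gamma_t + (1-\nu)\sign(\tilde D_t - x_0)\dd t$, which comes from (\ref{eq:uniform-driver}) after the time change $\xi$. The second is the flow equation (\ref{floweq2}) with $g_0=\mathrm{id}_\R$ and $z_0=x_0$. The Brownian motion in the statement will be $\gamma:=\tilde\gamma$.

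First I rewrite the flow integrand in tip-centered coordinates. Since $\dd[\tilde D]_t=\dd t$ and $\tilde g_s$ is increasing,
$$\1_{(x_0,\tilde g_s(x))}(\tilde D_s)=\1_{(x_0-\tilde D_s,\,R_s(x))}(0).$$
Away from the Lebesgue-null set of times where $\tilde D_s=x_0$ or $R_s(x)=0$, this equals $\mathbf 1_{\{R_s(x)>0\}}$ on $\{\tilde D_s>x_0\}$. On $\{\tilde D_s<x_0\}$ it equals $-\mathbf 1_{\{R_s(x)<0\}}$. Those null sets are negligible because $\tilde D$ is a Brownian motion with bounded drift, and $R(x)=\tilde g(x)-\tilde D$ is a continuous semimartingale with $\dd[R(x)]_t=\dd t$, so the occupation-density formula applies to both.

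Next I compute $R_t(x)=\tilde g_t(x)-\tilde D_t$. Substituting the SDE for $\tilde D$ gives exactly the displayed intermediate formula, with drift $(\nu-1)+\mathbf 1_{\{R_s(x)>0\}}$ on $\{\tilde D_s>x_0\}$ and drift $-(\nu-1)-\mathbf 1_{\{R_s(x)<0\}}$ on $\{\tilde D_s<x_0\}$.

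The third step identifies the side of the target through $R_s(x_0)$. Since $\tilde g_s(x_0)=x_0$ by Proposition \ref{stretchflow}, we have $R_s(x_0)=x_0-\tilde D_s$. Hence $\{\tilde D_s>x_0\}=\{R_s(x_0)<0\}$ and $\{\tilde D_s<x_0\}=\{R_s(x_0)>0\}$. I then split each bracket according to the sign of $R_s(x)$, up to null sets:
\begin{itemize}
\item on $\{R_s(x_0)<0\}$, the drift is $\nu-1$ if $R_s(x)<0$ and $\nu$ if $R_s(x)>0$;
\item on $\{R_s(x_0)>0\}$, the drift is $-\nu$ if $R_s(x)<0$ and $1-\nu$ if $R_s(x)>0$.
\end{itemize}
These are exactly the four terms of the statement.

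The only delicate point is the bookkeeping of the null sets, together with the case $x_0=\pm\infty$. In that boundary case one of the two sides of the target is empty, and the formula reduces to an ordinary Bass--Burdzy flow. The statement is only claimed up to time $t_{\max}$, where the parametrization $\xi$ is defined, so the reflection at $\pm\infty$ never enters the computation.
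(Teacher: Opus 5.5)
Your proposal is correct and follows the paper's own derivation. Like the paper, you substitute the quadratic-variation-parametrized driver SDE into the flow equation (\ref{floweq2}) with $g_0=\mathrm{id}_\R$, split the signed indicator according to the side of $\tilde D_s$ relative to $x_0$, and use $R_s(x_0)=x_0-\tilde D_s$ to rewrite that side in terms of $R_s(x_0)$; two points the paper leaves tacit are handled correctly in your write-up, namely discarding the Lebesgue-null times $\{R_s(x)=0\}$ and $\{\tilde D_s=x_0\}$ via $\dd[R(x)]_t=\dd t$ and occupation density, and obtaining the constant term $x$ from $\tilde D_0=g_0(X_0)=0$.
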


In particular, in the special case of a boundary target $x_0$, $R_t(x)$ is exactly an usual Bass-Burdzy flow with parameters $(\nu-1,\nu)$ (for $x_0=-\infty$) and $(-\nu, 1-\nu)$ (for $x_0 = + \infty$). 

This ``skewed Bass-Burdzy flow" already appears in an analogous form, restricted to the positive half-line, in Section 7.3 of \cite{AHS}.

\begin{remark}
\label{remarkcritical}
We can once again notice that at the critical intensity $\nu = 1/2$, as discussed in the previous subsection, the dependence in the target $x_0$ vanishes because the burglars at this particular intensity are target-independent and all have same law until the first disconnection time.
\end{remark}

The Bass-Burdzy construction is limited in the sense that it allows a reconstruction of the burglar only up until the time $t_{\text{max}}$, at which it exhausts the capacity at some point (this stopping time is almost surely finite if and only if $\nu<1$). Once again, this is why we have to choose a particular time-scale for the burglar's drivers of parameter $\nu<1$, so that the drivers can reach the regular boundaries in finite time and be reflected.

\subsection{Characterization of the Brownian burglars}

Let $(\Pb^\lambda_{x \to x_0})_{\lambda,x,x_0}$ denote a family of probability laws on admissible processes at $x_0$ for the capacity $\lambda$, started at $x$. This family is indexed by an admissible capacity $\lambda$ on an interval $I$, a starting point $x \in I$, and a target anchor $x_0 \in \overline I$. As usual, we consider the geometric paths of the processes, so all the corresponding processes are defined up to time-change.

\begin{definition}[Admissible Targeted Local-Time Markov Family] \label{def:axioms}
The family of laws $(\Pb^\lambda_{x \to x_0})$ is \emph{regular} if it satisfies the following conditions :
\begin{enumerate}
\item \textbf{Stretching Covariance:} If $g$ is a stretching morphism from $\lambda$ to $g(\lambda)$, then $g$ pushes forward $\Pb^\lambda_{x \to x_0}$ to $\Pb^{g(\lambda)}_{g(x) \to g(x_0)}$, i.e.\ if $X\sim \Pb^\lambda_{x \to x_0}$, then $g(X)\sim \Pb^{g(\lambda)}_{g(x) \to g(x_0)}$.
\item \textbf{Symmetry:} If $X\sim \Pb^\lambda_{x \to x_0}$, then $-X\sim \Pb^{\tilde \lambda}_{-x \to -x_0}$, where $\tilde \lambda$ is the symmetric of the capacity $\lambda$ on the interval $I$, i.e.\ $\tilde \lambda(x) = \lambda(-x)$ for $x\in -I$.
\item \textbf{Local-Time Markov Property:} If $(X_t)_{0 \le t \le T(X)} \sim \Pb^\lambda_{x \to x_0}$, then for any finite stopping time $T$ for the filtration of $X$, conditionally on $X_T$ and $\lambda_T$, we have $(X_{T+t})_{0\le t \le T(X)-T}\sim\Pb^{\lambda_T}_{X_T \to x_0}$ independently on $\F_T$.
\item \textbf{Side-Locality:} If $X\sim \Pb^\lambda_{x \to x_1}$ and $X'\sim \Pb^\lambda_{x \to x_2}$ with $x<x_1<x_2$, then $X$ and $X'$ have same law until reaching $x_1$, i.e.\ in the same time-parametrization, $$\left( X_t \right)_{0 \le t \le \tau_{x_1}}  \overset{(d)}{=}  \left( X'_t \right)_{0 \le t \le \tau'_{x_1}}.$$

\end{enumerate}
\end{definition}

\begin{remark}
Stretching Covariance and Local Time Strong Markov Property can actually be combined into one condition (analogous to the conformal Markov property of SLEs), namely a \emph{Stretching Markov property}.
For any almost surely finite stopping time $T$, let $g$ be a stretching morphism from $\lambda_T$ to a capacity $g(\lambda_T)$. Then the process $(g(X_{T+u}))_{u \ge 0}$ has the law $\Pb^{g(\lambda_T)}_{g(X_T) \to g(x_0)}$ and is independent of $\F_T$ conditionally on $X_T$ and $\lambda_T$.
In particular, $(g_T(X_{T+u}))_{u \ge 0}$ has the law of a Brownian burglar on the constant capacity $1$, starting from $g_T(X_T) = D_T$ and targeting $z_0=g_T(x_0)$, and is independent of $(D_t)_{0 \le t \le T}$ (and thus of $(X_t)_{0 \le t \le T}$) conditionally on $D_T$.

\end{remark}

All these conditions verified together characterize the family of laws $\mathbb P$ as that of the Brownian burglars, up to the real parameter $\nu\in\R$.

\begin{theorem}[Characterization of burglars] \label{thm:classification}
Let $(\Pb^\lambda_{x \to x_0})$ be a \emph{regular} family of laws, \emph{maximal in time}. Then there exists $\nu \in \R$ such that for all admissible $\lambda$ and corresponding points $x, x_0$, the law $\Pb^\lambda_{x \to x_0}$ is that of the Brownian burglar of intensity $\nu$, initial capacity $\lambda$, started from $x$ and targeting $x_0$.
\end{theorem}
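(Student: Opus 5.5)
The plan is to pass to drivers and show that the driver of any regular family must be the burglar driver of Definition~\ref{def:burglar_driver} for some $\nu$. Fix $\lambda$, $x$, $x_0$ and $X\sim\Pb^\lambda_{x\to x_0}$. Let $g_0$ be a stretching morphism to the constant capacity $1$, and let $D_t=g_t(X_t)$ be the driver given by the stretching flow (\ref{floweq}). By Proposition~\ref{characterization}, $X$ and $D$ determine each other, so it suffices to identify the law of $D$. By stretching covariance we may work directly on the constant capacity $1$ on $\R$ with $g_0=\mathrm{id}$.

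\textbf{Step 1: $D$ is a time-homogeneous diffusion.} Combine the local-time Markov property with stretching covariance into the stretching Markov property of the preceding remark. Conditionally on $\F_T$, the process $g_T(X_{T+\cdot})$ has law $\Pb^{1}_{D_T\to z_0}$, where $z_0=g_T(x_0)=g_0(x_0)$ by Proposition~\ref{stretchflow}. Its driver for the initial stretching $\mathrm{id}$ is $D_{T+\cdot}$, because the flow of $X_{T+\cdot}$ started from $g_T$ is $g_{T+\cdot}$, as in the proof of Theorem~\ref{thm:LT_Markov}. Hence $D$ is strong Markov with transition law depending only on $(D_T,z_0)$. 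Translations are exactly the Möbius maps of the constant capacity, so covariance makes the law of $D-z_0$ depend only on the starting point $D_0-z_0$.

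\textbf{Step 2: reduction to a drift equation.} Parametrize $D$ by its quadratic variation. Then $D$ is a continuous strong Markov process on $\R$ with $\dd[D]_t=\dd t$ and translation-covariant laws relative to $z_0$. The main idea is to use side-locality to show that, on each side of $z_0$, the law cannot depend on the distance $|D-z_0|$. The computation in the proof of Proposition~\ref{sideloc} is purely algebraic: it applies to an arbitrary family and shows that changing the target from $z_2$ to $z_1$ turns the driver into $D+z_1-Y_t$, with $\dd Y_t=-\mathbf 1_{(Y_t,z_2)}(D_t)\dd[D]_t$. For $D<z_1$ the shift $Y$ is frozen, so below both targets the driver relative to $z_1$ agrees in law with the driver relative to $z_2$ up to a deterministic constant shift. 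Side-locality identifies the two processes up to hitting $x_1$. Consequently the local characteristics of $D$ at a point $d<z_0$ are independent of $z_0$ over all $z_0>d$. Combined with translation covariance, they are constant on $\{D<z_0\}$, and by symmetry they are constant on $\{D>z_0\}$.

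\textbf{Step 3: identifying the characteristics and $\nu$.} A continuous strong Markov process with unit quadratic variation and translation-invariant local behaviour on a half-line is Brownian motion with constant drift there, say $c$. One still needs an argument, for instance via Feller's scale and speed, that it is a semimartingale. Symmetry then forces drift $-c$ on the other side, so
\[\dd D_t=\dd B_t+c\,\sign(D_t-z_0)\,\dd t,\]
which is (\ref{eq:uniform-driver}) with $\nu:=1-c$. The behaviour at $z_0$ itself remains to be pinned down, since a priori a skew or sticky point could occur. Stickiness is excluded because $[D]$ is the time parameter. Skewness is excluded either by symmetry, which forces the skew parameter to be $1/2$, or by side-locality applied with targets on both sides. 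Maximality in time then fixes the boundary behaviour at $\pm\infty$: reflection as in (\ref{eq:fastdriver}), with the process run until $T(D)$. Any other choice gives a process that could be extended, or is not admissible. Finally, the constant $\nu$ must not depend on $\lambda$, $x$, $x_0$. Stretching covariance moves every configuration to the constant capacity, and translation covariance together with Step 2 makes $c$ independent of $z_0$.

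\textbf{Main obstacle.} I expect Step 2 to be the hardest. It requires turning side-locality, which is a statement about laws of $X$ up to $\tau_{x_1}$, into the statement that the driver's generator is locally independent of the target. Doing so needs care with the stopping time $\tau_{x_1}$ versus the disconnection time, and a regularity argument ruling out pathological Markov processes with unit quadratic variation that are not Itô diffusions. I would first try to use Step 1 to get a Feller diffusion, then read off scale and speed from the translation invariance on each half-line.
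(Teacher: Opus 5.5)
Your proposal is correct and has the same skeleton as the paper's proof. You pass to drivers via stretching covariance, get a translation-covariant strong Markov family, use side-locality and symmetry to obtain drift $\pm\beta$ on the two sides of $z_0$, and use the time-parametrization and maximality in time to settle the behaviour at $z_0$ and at $\pm\infty$. It differs from the paper precisely at the step you flag as the main obstacle.

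The paper first identifies $\mathbb Q_{z\to-\infty}$. With a boundary target, translation covariance is global: for every $T$ with $D_T\in\R$, the increments $(D_{T+t}-D_T)_{t\ge0}$ have law $\mathbb Q_{0\to-\infty}$ independently of $\F_T$. So the quadratic-variation-parametrized driver is a continuous L\'evy process on excursions away from $\pm\infty$, hence of the form $z+\tilde\gamma_t+\beta t$. No semimartingale, scale or speed argument is needed. Side-locality with the second target taken at the boundary of $I$, together with symmetry, then transfers this to an interior target, up to hitting $z_0$.

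Your route extracts ``locally constant characteristics'' on each half-line from interior targets only. It can be completed: translation invariance forces an exponential or affine scale function and the matching speed measure, and $\dd[D]_t=\dd t$ fixes the normalization. But it requires regular-diffusion theory that the boundary-target trick avoids entirely.

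At the target point itself the two arguments also differ. The paper excludes a skew term using the bicontinuous local times built into the definition of an admissible driver. You instead note that translation covariance makes the skew parameter a single constant $p$, and that symmetry sends it to $1-p$. This is equally valid and uses only the axioms.
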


\begin{remark}
By ``maximal in time", we mean that if we can couple the family of laws on paths $\Pb$ to an other regular family of laws $\Pb'$ until final time of either path, then the final times of the paths of $\Pb$ are always greater than those of $\Pb'$.

Recall once again that we choose to work with \emph{geometric paths} (defined up to time-reparametrization) rather than directly with processes, thus the tempting counterexample defined as $X_t = x$ for $0\leq t <+\infty$ can actually be reparametrized as the process with final time $0$ defined by $X_0=x$ and $T(X)=0$, so it is indeed an admissible family of processes verifying the regular conditions, but it is not at all maximal in time !
\end{remark}

\begin{proof}[Proof of Theorem]
    Let $(\Pb^\lambda_{x \to x_0})$ be such a regular family. The Stretching covariance of the family and the transformation of processes Proposition \ref{transfprocess} implies we can define the corresponding family of drivers law $(\mathbb Q_{z \to z_0})$, where for each $\lambda$ admissible capacity on an interval $I$, for $x,x_0 \in \overline I$ and $g$ Stretching morphism from $\lambda$ to the constant capacity 1, the law $\mathbb Q_{g(x) \to g(x_0)}$ is that of the driver of the process $X\sim  \Pb^\lambda_{x \to x_0}$ at $x_0$ for the initial stretching $g$. Recall that the drivers are always defined up to time-reparametrization.

    By Proposition \ref{characterization}, it suffices to characterize the family of laws $(\mathbb Q_{z \to z_0})$. We can translate the properties verified by the paths of the family $\mathbb P$ to their drivers with law $\mathbb Q$. The stretching covariance implies that $\mathbb Q$ is translation covariant (i.e.\ if $D\sim \mathbb Q_{z \to z_0}$, then for $c\in \R$, $D+c\sim \mathbb Q_{z+c \to z_0+c}$) ; it is also symmetric and side-local ; and the local-time Markov property translates in the following way : for 
    $(D_t)_{t\geq 0}\sim \mathbb Q_{z \to z_0}$, then conditionally on $D_T$, 
    $(D_{T+t})_{t\geq 0}\sim \mathbb Q_{D_T \to z_0}$ independently on $\F_T$.
    
    Let us fix $z\in \R$, and start by identifying the law $\mathbb Q_{z \to -\infty}$. A driver $D$ with this law takes values in $\overline \R$, and for every $T\geq0$ such that $D_T\in \R$, by the translation covariance together with the Markov property of the drivers, $(D_{T+t}-D_T)_{t\geq 0}\sim \mathbb Q_{0 \to -\infty}$ and is independent of $\F_T$ the filtration of $D$ until time $T$. In particular, this means that $\tilde D$ the quadratic-variation parametrized version of $D$ is a continuous Levy process, so it is of the form $\tilde D_t = z + \tilde\gamma_t+\beta t$ for some Brownian motion $\tilde\gamma$ and some $\beta\in\R$. Recall that the quadratic variation time-parametrization of $D$ allows to parametrize the path of $D$ between two times at which it is at a boundary point $\pm\infty$, and diverge as $D\to \pm\infty$, so this completely determines the law of $D$ on every excursions away from $\{\pm\infty\}$.
    
    Side-locality and symmetry allow us to identify for general $z_0$ the law of $D\sim \mathbb Q_{z \to z_0}$ on excursions away from $\{z_0\}$ and the boundary points $\{\pm \infty\}$ as being, up to reparametrization, solution of the SDE $\dd \tilde D_t = \dd \tilde\gamma_t+\beta\sign(\tilde D_t-z_0) \dd t$. The local-time Markov property imposes that the parameter $\beta$ is the same on all such excursions of the driver $D$. Changing to the time-parametrization (\ref{eq:fastdriver}) that allows a full-time reconstruction of the burglars of any intensity, we obtain that $D$ is solution of the SDE
    $$\dd D_t = \exp\left(\cosh(D_t)\right)\dd\gamma_t+ \exp\left(2\cosh(D_t)\right)\beta\sign(D_t-z_0)\dd t,$$
    on $\R\setminus\{z_0\}$, for some Brownian motion $\gamma$. Moreover, by definition the driver $D$ admits a bicontinuous local time process, in particular it is spatially continuous at the point $z_0$ (ruling out a drift term supported on $\{z_0\}$). In the chosen time-parametrization, a sticky behaviour at any point $\{z_0,\pm\infty\}$ is excluded. Finally, the maximality in time rules out any killing at any non-target points (as soon as the boundary is regular).
    There is therefore strong uniqueness of the solution to the corresponding SDE on $\overline \R$, so $D$ is the actual burglar driver of parameter $\nu = 1-\beta$.
    
    We can thus finally conclude that the law $\Pb^\lambda_{x \to x_0}$ is that of the Brownian burglar of intensity $\nu$, on the capacity $\lambda$, started from $x$ and targeting $x_0$.
    \end{proof}

\subsection{Phases of the burglars}

We keep on drawing the parallel with SLEs by identifying different ``phases" behaviour of the burglars. Let $\lambda_0$ be an admissible capacity on a bounded interval $I$, and
let $X$ be a burglar of parameter $\nu$ with initial capacity $\lambda_0$, started from $x$ and targeting $x_0$, and $L_X$ its local time process. We will describe the profile $L_X$ at time $T(X)$, the terminal time of the process $X$.

\begin{proposition}[Phases of the burglars]
\label{phases}
The burglar $X$ behaves in the following way depending on the parameter $\nu$ :

- For $\nu \geq 1 $, $X$ does not hit full local time anywhere else than its target, i.e.\ $L_X(T(X),x)<\lambda_0(x)$ for all $x\in I\setminus\{x_0\}$ almost surely.

- For $\nu \in (0,1)$, for any deterministic $x\in I\setminus\{x_0\}$, $L_X(T(X),x)<\lambda_0(x)$ almost surely, but there almost surely exists exceptional $x\in I\setminus\{x_0\}$ such that $L_X(T(X),x)=\lambda_0(x)$. 

- For $\nu \leq  0$, $X$ is ``local-time-filling", i.e.\ for all $x\in I$, $L_X(T(X),x)=\lambda_0(x)$ almost surely.

\end{proposition}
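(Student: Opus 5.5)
The plan is to turn everything into a statement about the driver via Lemma \ref{lemmatech}. By stretching covariance (Proposition \ref{transfprocess}) we may take $g_0$ to be any uniformization, and Lemma \ref{lemmatech} gives
$$L_X(t,x)=\lambda_0(x)\bigl(1-e^{-L_t(x)}\bigr),$$
where $L_t(x)$ is the local time at $0$ of $R^x_t:=D_t-g_t(x)$. Hence $L_X(T(X),x)=\lambda_0(x)$ exactly when $L_t(x)\to\infty$. This is equivalent to $g_t(x)$ reaching $\pm\infty$, i.e.\ to $x$ being swallowed at or before $T(X)$.

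So, for a fixed $x$ (say $x>x_0$; the other side follows from the symmetry proposition), I study the two-particle system $(D_t,g_t(x))$ in the quadratic-variation parametrization of (\ref{eq:uniform-driver}). As in Section \ref{bassburdzy}, $dg_t(x)=\mathbf 1_{\{z_0<D_t<g_t(x)\}}\,dt$. Hence $R^x$ is a Brownian motion with drift $-\nu$ on $\{R^x<0\}$ (i.e.\ $z_0<D<g(x)$) and drift $1-\nu$ on $\{R^x>0\}$, as long as $D>z_0$. Below $z_0$ the point $g_t(x)$ is frozen and $D$ has drift $\nu-1$.

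The three regimes then read off from the sign of these drifts near $0$, combined with the boundary behaviour from Table \ref{tab:feller_classification}.

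\textbf{Case $\nu\ge1$.} The boundaries $\pm\infty$ are entrance, so $D$ is never reflected back from them. Moreover $R^x$ is pushed towards $-\infty$ on $\{R^x<0\}$, since its drift there is $-\nu<0$. Any positive excursion has drift $1-\nu\le0$, so it comes back to $0$, after which the negative side takes over. Thus $R^x$ leaves $0$ for good after finitely many excursions, and $L_\infty(x)<\infty$. What happens is that $D$ stays near $z_0$ while $g_t(x)\to+\infty$ only as $t\to\infty=T(D)$.

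To get the statement simultaneously for all $x\ne x_0$, I will use monotonicity of the flow in $x$ together with continuity of $(t,x)\mapsto L_t(x)$. On a compact $[a,b]\subset(x_0,\sup I)$, after the last visit of $D$ to $g_t(a)$ one has $D_t<g_t(a)\le g_t(x)$ for all $x\in[a,b]$ at all later times. Hence $L_t(x)$ is frozen, and it is finite by joint continuity.

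\textbf{Case $\nu\in(0,1)$.} Now $-\nu<0$ and $1-\nu>0$, so $0$ is repelling for $R^x$, which is transient and has finite local time between visits of $D$ to the boundaries. The issue is the reflections at the regular boundaries $\pm\infty$. A hitting time of $\pm\infty$ in the uniform scale is exactly a time $t_{\max}$ at which some point gets exhausted, namely $X_{t_{\max}}$ (Section \ref{bassburdzy}). This already produces the exceptional exhausted points, which exist almost surely because $D$ reaches a regular boundary in finite time.

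For a deterministic $x$, I will argue that $x$ is exhausted only if $X$ sits exactly at $x$ at such a boundary time. By the local-time Markov property and stretching covariance, the position of $X$ at such a time has a law with no atoms. Then a countable union over the successive reflections gives probability $0$. I expect this step to be the main obstacle: one must control the accumulation of reflections, and check that after each reflection the new configuration is again a burglar with diffuse exhaustion point.

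\textbf{Case $\nu\le0$.} On $\{R^x<0\}$ the drift $-\nu\ge0$ pushes $R^x$ back towards $0$, while on $\{R^x>0\}$ the drift $1-\nu\ge1$ sends $D$ to $+\infty$. There $D$ is reflected (the boundary is regular) and returns through $g_t(x)$. Each return yields a local-time increment that is stochastically bounded below, independently by the strong Markov property of $D$. Since $T(D)=\infty$ for an interior target, infinitely many returns occur unless $x$ is swallowed first, so $L_\infty(x)=\infty$ a.s. for each fixed $x$. For boundary targets, the target is an entrance boundary when $\nu\le0$, so the same conclusion holds up to $T(D)$.

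This gives exhaustion simultaneously for all rational $x$. The set of exhausted points $\{x:L_X(T(X),x)=\lambda_0(x)\}$ is closed by bicontinuity of the local time, so it contains all of $I$.
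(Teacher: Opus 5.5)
Your $\nu\ge 1$ case is essentially fine, and more explicit than the paper's. There is, however, a genuine gap in the sentence asserting that $L_t(x)\to\infty$ ``is equivalent to $g_t(x)$ reaching $\pm\infty$, i.e.\ to $x$ being swallowed''. That equivalence is false. The point $x$ is disconnected from $x_0$ (so $g_t(x)$ reaches $\pm\infty$) as soon as $X$ exhausts \emph{some} point $y$ strictly between $x_0$ and $x$, and nothing forces $\lambda_t(x)=0$ at that instant. Afterwards $x\notin I_t$, $X$ never returns to $x$, and $L_t(x)$ stays frozen at a finite value. Disconnection with positive remaining capacity is exactly the mechanism separating $\nu\in(0,1)$ from $\nu\le 0$, and ruling it out is the real content of the paper's proof.

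Your $\nu\le 0$ paragraph therefore does not conclude. The claim ``infinitely many returns occur unless $x$ is swallowed first'' gives $L_\infty(x)=\infty$ only once you show that, for $\nu\le 0$, $x$ cannot be swallowed while $L_t(x)$ is still finite. The paper gets this from the tip-centered flow $R_t=g_t-D_t$ (your $-R^x$), run on one excursion of the driver away from the non-target boundary and reparametrized by $[D]$. For the boundary target $x_0=\inf I$ treated there, this is a Bass--Burdzy flow with drifts $(\nu-1,\nu)$. A point is cut off with positive capacity at the end of the excursion iff $R_t(x)\to+\infty$, and this can happen iff $\nu>0$.

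In your variables the missing step is as follows. Suppose $x>x_0$ is cut off at time $\tau$ by some $y\in(x_0,x)$. Then $R^x<0$ on a left neighbourhood of $\tau$ while $D_t\to+\infty$. The time $\tau$ cannot be an accumulation point of earlier reflections, since at those $D=+\infty>g_t(x)$. So this neighbourhood is the terminal stretch of an excursion of $D$, which has infinite $[D]$-length. The drift of $R^x$ on $\{R^x<0\}$ is $-\nu\ge 0$, so this has probability $0$ when $\nu\le0$ (and positive probability when $\nu>0$). Your drift computation contains the ingredient, but this step has to be made explicitly.

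The deterministic-point half of the case $\nu\in(0,1)$ is likewise not yet an argument, and the route you sketch fails as stated. At a regular, instantaneously reflecting boundary, the set $\{t: D_t\in\{\pm\infty\}\}$ is perfect and uncountable. So there is no sequence of ``successive reflections'' over which to take a countable union, and the no-atom claim is unsupported at accumulation times. What is countable is the family of excursions of $D$ away from $\{\pm\infty\}$, and the relevant dichotomy is again the terminal behaviour of $R^x$.

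On each excursion that reaches $g_t(x)$, apply the strong Markov property at the first hitting time. With probability bounded away from $0$, $g_t(x)$ then outruns $D$ to the boundary, so $x$ is cut off with positive capacity. For a boundary target this probability is exactly $\nu$, from the scale function of the drifts $-\nu$ and $1-\nu$. Otherwise $R^x$, being transient at $0$, collects only an a.s.\ finite local time. Hence only finitely many excursions visit $x$ before it is cut off, $L_t(x)$ remains finite, and accumulations of reflections cause no trouble.
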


\begin{proof}

Let $g_0$ be a stretching morphism from $\lambda_0$ to the constant capacity $1$, let $g_t$ be the corresponding stretching flow from $\lambda_t$ to $1$, and let $D_t=g_t(X_t)$ be the corresponding driver. For any time-parametrization and any $t\geq0$, $D_t\in\{-\infty,+\infty\}$ if and only if $\lambda_t(X_t)=0$.

Recall that the burglar driver $D$ of parameter $\nu\in\R$ reaches the (non-target) boundaries $\{\pm\infty\}$ if and only if $\nu<1$. This gives the existence of some points $x\in I$ such that $\lambda_{T(X)}(x) = 0$ if and only if $\nu <1$.

When $\lambda_t$ reaches 0 at some point $x$, it disconnects all the points on the other side of $x$ that $x_0$. To determine if when some point $x$ is disconnected from the target $x_0$, all the available local time at $x$ has been exhausted or not, one can study the tip-centered stretching flow as in Subsection \ref{bassburdzy}. We only treat the case of a boundary target $x_0 = \inf I$ as the general case is very similar. Let $(R_t)$ be the flow of stretching morphisms from the capacity $\lambda_t$ to the constant capacity $1$, sending the point $X_t$ to the point $0$. For $T_1>0$, $R$ is well-defined until the greatest time $t_{\text{max}}$ such that for any $t\in (T_1,t_{\text{max}})$, the driver $D_t$ is not a boundary point, i.e.\ $\lambda_t(X_t)>0$. We know that in the case of boundary target $R_t(x_0)=-\infty$, so by Subsection \ref{bassburdzy}, conditionally on $R_{T_1}$, if we reparametrize the time on $(T_1, t_{\text{max}})$ to the quadratic variation of the driver via a time-change $\xi$, then $\xi(0) = T_1$, $\xi(t)\to t_{\text{max}}$ as $t\to\infty$, and for $R'_t = R_{\xi(t)}$ :
$$R'_{t}(x) = R'_{0}(x) - \gamma_t + (\nu-1)\int_0^t \mathbbm{1}(R'_s(x)<0)\dd s + \nu\int_0^t \mathbbm{1}(R'_s(x)>0)\dd s,$$
for some Brownian motion $\gamma$.
Therefore almost surely, for all $x\in\R$, $R'_t(x)\to -\infty$ as $t\to \infty$ if and only if $\nu \leq 0$. 
Going back to the process $X$, a given point $x\in I$ is disconnected from $x_0=\inf I$ at time $t_{\text{max}}$ with $\lambda_{t_{\text{max}}}(x)> 0$ if and only if $X_{t_{\text{max}}}\neq x$, i.e.\ $x_0< X_{t_{\text{max}}} < x$. In terms of the reparametrized tip-centered stretching flow $R'$, this is equivalent to $R'_t(x) \to + \infty$ as $t \to \infty$, which can happen if and only if $\nu>0$.
\end{proof}

One can understand these phases via the disintegrations that we state in the following section, giving an interpretation of the different burglars.

\section{Burglars and disintegration of loop-soups}
\label{disintegration}

The geometric definition of the Brownian burglar -- via its driver -- provides a clean, pathwise construction. We now finally motivate this definition by showing that it coincides exactly with the physical construction of the burglar obtained by disintegrating a Brownian motion embedded in a loop-soup with respect to its total occupation field (when $\nu \ge 0$), and also give another similar disintegration for the burglars of negative intensity.

\subsection{Burglars of nonnegative parameter}

Let $x, y \in \R$, and let $b \geq 0$. Let $\nu \ge 0$ be the intensity parameter. Let $\mathcal{L}$ be a Brownian loop-soup of intensity $\nu$ on $\R \setminus \{y\}$, and let $\Lambda$ be its occupation time profile. Let $B$ be an independent standard Brownian motion started from $x$ and stopped at the random time $\tau_{y }^b = \inf\{t \ge 0 : L_B(t, y ) > b\}$, where $L_B$ is the local time of $B$.
Let $\lambda_0$ be the total occupation profile of the field
$$\lambda_0(z) := \Lambda(z) + L_B(\tau_{y}^b, z), \quad z \in \R.$$
Let $I$ be the connected component of $\{\lambda_0 > 0\}$ containing $x$. By standard properties of squared Bessel processes, $\lambda_0$ is almost surely an admissible capacity on the interval $I$.

\begin{theorem}[Brownian motion/Loop-Soup Disintegration] \label{thm:disintegration}
Conditionally on the total occupation profile $\lambda_0$, the Brownian motion $B$ has the law of the burglar of intensity $\nu$, starting from $x$, targeting $y$, with initial capacity $\lambda_0$, parametrized in its quadratic variation time.
\end{theorem}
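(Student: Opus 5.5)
Since Proposition \ref{characterization} makes an admissible process and its driver determine one another, I will prove the theorem at the level of drivers. Let $g_0=\eta_{\lambda_0,x}$ be the stretching from $\lambda_0$ to the constant capacity $1$ with $g_0(x)=0$, and set $z_0=g_0(y)$. Let $(g_t)$ be the stretching flow (\ref{floweq}) of $B$ at the target $y$, and let $D_t=g_t(B_t)$ be its driver. The claim is then equivalent to the following statement: conditionally on $\lambda_0$, the process $D$ is a burglar driver of parameter $\nu$ started from $0$ and targeting $z_0$.

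The first step is a local-time Markov property for the disintegration. Let $T$ be a stopping time for $B$ with $T<\tau^b_y$. By the strong Markov property of $B$, the post-$T$ process $(B_{T+s})$ is a Brownian motion from $B_T$ stopped at local time $b-L_B(T,y)$ at $y$, and it is independent of $\mathcal L$. Its occupation field together with $\Lambda$ equals $\lambda_T=\lambda_0-L_B(T,\cdot)$. Hence, conditionally on $(\lambda_0,(B_s)_{s\le T})$, the law of $(B_{T+s})$ is the disintegration with data $(\lambda_T,B_T,y)$. The reason is that conditioning on $\lambda_0$ and the past amounts to conditioning on the past and on $\lambda_T$, because $\lambda_0=\lambda_T+L_B(T,\cdot)$. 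I will make this rigorous using regular conditional laws.

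The second step is the stretching covariance of the disintegrated laws, which is the Proposition of Aïdékon--Hu--Shi recalled in the introduction. Admitting it, the conditional law of $g(B)$ given $\lambda_0$ depends only on $g(y)$ once $g$ uniformizes $\lambda_0$. Combining the two steps, the driver has the following property: conditionally on $D_T$, the shifted process $(D_{T+s}-D_T)$ has a law depending only on $z_0-D_T$, and it is independent of $\mathcal F_T$. In other words, $D$ is a time-homogeneous translation-covariant strong Markov process relative to the target. Conditioning under reflection $x\mapsto -x$ of the Brownian motion and the loop-soup gives symmetry. For side-locality I would use that before $B$ hits $y$ its conditional law, given the profile on the side of $x$, does not see how the total local time is split beyond $y$. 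This is a Ray--Knight excursion decomposition at $y$: the field on the other side of $y$ is conditionally independent. These are exactly the axioms used in the proof of Theorem \ref{thm:classification}. Reading off that proof, $D$ is, on excursions away from $z_0$, a Brownian motion with drift $\beta\,\sign(D-z_0)$ in its quadratic variation parametrization, for some constant $\beta$. Note that $B$ is stopped at $\tau^b_y$ with $b>0$, so no time-maximality argument is needed, since the final time is determined.

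The main obstacle is identifying $\beta=1-\nu$. I would compute it with an explicit Ray--Knight computation (Girsanov/$h$-transform) at the first step. On the side of $x$ away from $y$, the profile $\lambda_0$ seen from $B$ is a squared Bessel process: dimension $2\nu$ for the loop-soup part, plus a contribution $2$ from the Brownian crossings on the segment between $x$ and $y$, with a BESQ$^{2\nu}$/BESQ$^0$ Markov structure on each side. Conditioning $B$ on this profile makes it a Doob transform of Brownian motion, whose infinitesimal drift is $\tfrac12\partial_x\log\lambda$ plus $(\tfrac12-\nu)\sign/\lambda$, as in the heuristic (\ref{heursde}). Stretching by $g_t$ then turns this into driver drift $(1-\nu)\sign(D-z_0)$.

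A cleaner alternative that I would try first is to check a single quantity, the exit probability of $D$ from a symmetric interval $(-\epsilon,\epsilon)$ when $z_0=-\infty$, or equivalently its scale function. This can be computed from the explicit BESQ bridge laws: it is the probability that $B$ reaches $g_0^{-1}(\epsilon)$ before $g_0^{-1}(-\epsilon)$ given $\lambda_0$. Knowing it determines $\beta$. Finally, the boundary behaviour at $\pm\infty$ (reflection when $\nu<1$, corresponding to the places where $\lambda_t$ vanishes and $B$ crosses into the region vacated by loops) must match Definition \ref{def:burglar_driver}. I would justify this using continuity of local times, as in the last step of the proof of Theorem \ref{thm:classification}.
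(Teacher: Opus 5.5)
Up to the classification step your plan is the paper's. You check the local-time Markov property, stretching covariance (via A\"id\'ekon--Hu--Shi), symmetry and side-locality for the conditional laws. You then deduce that the driver, in its quadratic-variation time, is a Brownian motion with drift $\beta\sign(D-z_0)$. Your remark that $B$ runs until $\tau^b_y$, where $\lambda_t(y)=0$, is exactly the paper's time-maximality check. It is this, not continuity of local times, that excludes killing of $D$ at $\pm\infty$. The genuine gap is the step you flag yourself, the identification $\beta=1-\nu$: neither of your two routes works as stated.

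\emph{The Doob-transform route.} This is only the heuristic SDE (\ref{heursde}). The field $\lambda_0$ is not differentiable, so $\tfrac12\partial_x\log\lambda$ only acquires a meaning through the stretching flow itself. Moreover, the conditional law of $B$ given $\lambda_0$ is not an $h$-transform of Wiener measure: already for $\nu=0$ it is singular with respect to it on every time interval, as recalled in the introduction.

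\emph{The exit-probability route.} This rests on a false identification. For $z_0=-\infty$ the flow is $g_t(u)=g_0(u)+\int_0^t\mathbf 1_{\{B_s<u\}}\dd[D]_s\ge g_0(u)$, so $D_t=g_t(B_t)\ge g_0(B_t)$. Hence $D$ leaving $(-\epsilon,\epsilon)$ through $+\epsilon$ is not the event that $B$ reaches $g_0^{-1}(\epsilon)$ before $g_0^{-1}(-\epsilon)$. The discrepancy $D_t-g_0(B_t)$ is of order $[D]_t\approx\epsilon^2$, so it shifts the exit probability by order $\epsilon$. That is precisely the order of the term $\tfrac{\beta\epsilon}{2}$ in $\tfrac12+\tfrac{\beta\epsilon}{2}$, which is how $\beta$ is detected. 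Concretely, take the burglar of intensity $\tfrac12$ on the constant capacity. Symmetry, side-locality and target-independence force it to leave $(-\epsilon,\epsilon)$ through $+\epsilon$ with probability $\tfrac12+o(\epsilon)$. Its driver, with drift $\tfrac12$, does so with probability $\tfrac12+\tfrac{\epsilon}{4}+o(\epsilon)$. So your matching would return $\beta=0$ instead of $\tfrac12$. You also do not actually compute the conditional exit probability of $B$ given the whole field $\lambda_0$, which is a singular conditioning.

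\emph{How the paper does it.} It avoids local computations by matching a global quantity that is explicit on both sides. Take $x=0$, $y=1$, $b=0$. By Lemma \ref{lemmatech}, the fraction of capacity consumed at the starting point is $1-\exp(-\ell_\infty)$, where $\ell_\infty$ is the total local time at $0$ of $D-g_t(0)$. This $\ell_\infty$ is exponential with parameter $\alpha=1-\beta$, so the fraction is $\text{Beta}(1,\alpha)$. On the loop-soup side the same fraction equals $L_B(\tau^0_1,0)/(L_B(\tau^0_1,0)+\Lambda(0))$, with independent $\Gamma(1,1/2)$ and $\Gamma(\nu,1/2)$ variables, hence it is $\text{Beta}(1,\nu)$. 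This gives $\alpha=\nu$; alternatively one cites Proposition 5.13 of \cite{AHS}.
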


\begin{proof}
We prove this via the characterization of the burglar. We can define $\Pb^{\lambda}_{x \to y}$ the law of the Brownian motion $B$ conditioned on the occupation time field $\lambda$ for almost every $\lambda$ in the support of the random total occupation time field $\lambda_0$.

Almost surely, the path $B$ is admissible at $y$ for the capacity $\lambda_0$. The symmetry and the local-time Markov property are direct by symmetry and Markov property of the usual Brownian motion.

Stretching covariance is proven in Proposition 5.4 of \cite{AHS} in the boundary target case via their local-time flow approach. 

The side-locality of $B$ conditionally on $\lambda_0$ is a consequence of usual Brownian rewiring techniques : suppose e.g.\ $x<y$, then we can couple $B$ with a Brownian motion $B'$ started from $x$ and stopped at the time $\tau'$ at which it reaches some $y'>y$, and with a loop-soup $\mathcal L'$ of intensity $\nu$ in $\R\setminus \{y'\}$. Then, conditionally on $\lambda'_0(y) := \Lambda'(y) + L_{B'}(\tau', y)= b$, the local time profiles $\lambda_0$ and $\lambda'_0$ have same law below $y$ : they are both the sum of the local time profiles of a Brownian loop-soup of intensity $\nu$ in $(-\infty,y)$, an independent Brownian motion started at $x$ and when reaching $y$, and an independent Poisson point process of intensity $b$ times the measure on Brownian excursions below level $y$. Therefore, the couples $\left((B_t)_{0\leq t \leq \tau_y}, (\lambda_0(z))_{z\in(-\infty,y)}\right)$ and $\left((B'_t)_{0\leq t \leq \tau'_y}, (\lambda'_0(z))_{z\in(-\infty,y)}\right)$ have same law, which yields immediately the side-locality.

Using this side-locality property together with the stretching covariance for a boundary target gives stretching covariance for any target.

Lastly, at the final time, the Brownian path has consumed all the capacity available at the target, so the family is indeed maximal in time.

Therefore, by Theorem \ref{thm:classification} the family $(\Pb^{\lambda}_{x \to y})_{\lambda,x,y}$ (completed to any admissible capacity $\lambda$ in a well-defined way via stretching, thanks to stretching covariance) is the family of the laws of the Brownian burglars of some intensity $\alpha\in\R$.

Finally, the parameter $\alpha$ is determined in Proposition 5.13 of \cite{AHS}, where it is shown to be equal to the loop-soup intensity $\nu$, the intensity of the loop-soup, in boundary target case. By uniqueness of this parameter in the characterization of burglars, this case is sufficient to identify the laws $(\Pb^{\lambda}_{x \to y})_{\lambda,x,y}$ as the law of the Brownian burglars of intensity $\nu$.

For the sake of the completeness of our proof, let us give another short way to identify the parameter $\alpha$. Let e.g.\ $x=0$ and $y=1$, and $b=0$. We already have proved that conditionally on some $\lambda_0$, $B$ has the law of a burglar of intensity $\alpha$, starting from $0$, targeting $1$, with initial capacity $\lambda_0$. By the phases behaviour Proposition \ref{phases}, we have $\alpha > 0$ (unless $\nu=0$, in which case it is easy to get $\alpha=0$).
Let $g = \eta_{\lambda_0,\,0}$ be the stretching morphism uniformizing the capacity $\lambda_0$ to the uniform capacity 1. The driver of $B$ for initial stretching $g$ is the burglar driver of parameter $\alpha$ started from 0 and targeting $+\infty$, solution to the SDE
$$\dd D_t=\dd W_t+(\alpha-1)\dd t$$
for some Brownian motion $W$. 
By Lemma \ref{lemmatech}, as the time goes to its maximum, the local time at $x=0$ of $X:=g(B)$, $L_X(T(X),0) =  1 - \exp(-\ell_\infty(0))$, where $\ell_t(x)$ is the local time at 0 of the process $R_t(x) = D_t-g_t(g^{-1}(x))$. In particular $\ell_\infty(0)$ has the law of the local time at 0 of $ B_t+\alpha t$ as $t\to \infty$, which is famously an exponential random variable of parameter $\alpha$. Therefore, $L_X(T(X),0)= 1 - \exp(-\ell_\infty(0)) \sim \text{Beta}(1,\alpha)$. But we also know that $$L_X(T(X),0) = \frac{L_B(\tau^0_{1},0)}{\lambda_0(0)} = \frac{L_B(\tau^0_{1},0)}{L_B(\tau^0_{1},0)+ \Lambda(0)},$$
with $L_B(\tau^0_{1},0)\sim \Gamma(1,1/2)$ (as a squared Bessel process of dimension 2 evaluated at the point $x=1$) and $\Lambda(0)\sim \Gamma(\nu,1/2)$ (as a squared Bessel process of dimension $2\nu$ evaluated at the point $x=1$) independently. Therefore, $L_X(T(X),0)\sim  \text{Beta}(1,\nu)$, and finally $\alpha = \nu$.
\end{proof}

This Brownian motion / Loop-soup disintegration point of view also allows us to understand the phases of the burglar for $\nu \ge 0$ : it is usual that for $\nu\ge 1$, the loops of the loop-soup $\mathcal L$ in $\R\setminus\{y \}$ almost surely go through every point of $\R\setminus\{y \}$, such that the occupation time field $\Lambda$ is strictly positive on $\R\setminus\{y \}$, and thus $L_B(\tau^b_{y},x)<\lambda_0(x)$ for all $x\neq y$. For $0<\nu<1$, it is known that the occupation time field $\Lambda$ almost surely has infinitely many zeros near the point $y$, but any deterministic point $x\neq y$ is almost surely not a zero of $\Lambda$, such that for some fixed $x\in I$, $L_B(\tau^b_{y},x)<\lambda_0(x)$ with probability 1, but there also almost surely exists (infinitely many) exceptional points $x\in I$ such that $L_B(\tau^b_{y},x)=\lambda_0(x)$. Finally, for the intensity $\nu=0$, there is no loop in the loop-soup, and the Brownian motion conditioned by its local-time profile clearly exhausts all the available local time.

\ms

Moreover, by setting $b=0$, we get the disintegration in the boundary target case $y\in \partial I$. It is also possible to make sense of the boundary starting point $x\in\partial I$ via the same disintegration, by formally conditioning $\lambda_0$ to vanish at $x$, namely by taking $\mathcal L$ loop-soup in $\R\setminus\{x,y\}$ and $B$ a Brownian motion started at $x$ and conditioned not to come back to $x$ (i.e.\ $B = x + W$, where $W$ is Bessel process of dimension 3 started at 0). In particular, let $x<y$, $\mathcal L$ be a loop-soup of intensity $\nu$ in $(x,y)$ and $B$ be a Brownian excursion from $x$ to $y$ (staying in $(x,y)$), and let $\lambda_0$ the total occupation time of $\mathcal L$ and $B$. Then the burglar of intensity $\nu$, initial capacity $\lambda_0$, started from $x$ and targeting $y$ (in its quadratic variation time-parametrization) has exactly the law of the excursion $B$ conditionally on $\lambda_0$.

\subsection{Burglars of negative parameters}

Let $x, y \in \R$, and let $b \geq 0$. Let $\nu \leq 0$ be the intensity parameter.
Let $W$ be a $\nu$-doubly perturbed Brownian motion (DPBM) started (and centered) at $y$, as studied in \cite{MR1465164,MR1717529
}. $W$ is defined as the unique pathwise solution to  
$$ W_t = y+B_t +\nu\bigl((\sup_{s\leq t} W_s-y) +(\inf_{s\leq t} W_s-y)\bigr)$$
for $B$ a usual Brownian motion.
Let us stop $W$ at time $\tau_x^b :=\inf\{t\geq 0, L_W(t,x)>b\}$ (which is almost surely finite), and consider the time-reversed process $(\tilde W_t )_{0\leq t \leq \tau_x^b}=( W_{\tau_x^b -t})_{0\leq t \leq \tau_x^b}$. Let $\lambda_0 := L_W(\tau_x^b ,\cdot)$.

\begin{proposition}[DPBM Disintegration]
  \label{dpbm} 
  Conditionally on $\lambda_0$, the law of the time-reversed DPBM $\tilde W$ is that of the burglar of intensity $\nu\leq 0$, starting from $x$, targeting $y$, with initial capacity $\lambda_0$, parametrized in its quadratic variation time.
  \end{proposition}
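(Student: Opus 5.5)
The approach mirrors the proof of Theorem \ref{thm:disintegration}: I will show that the family of conditional laws $\Pb^{\lambda}_{x\to y}$ of the time-reversed DPBM $\tilde W$ given $\lambda_0=\lambda$ is regular and maximal in time. Theorem \ref{thm:classification} then identifies it with a burglar family of some intensity $\alpha$, and a final computation pins down $\alpha=\nu$. Admissibility of $\tilde W$ at $y$ for $\lambda_0$ should come from the Ray--Knight type description of DPBM local times: the profile $L_W(\tau^b_x,\cdot)$ is a squared Bessel type process that vanishes exactly at the extremities of the range. Since $\tilde W$ ends at $y$, which is the starting point of $W$ and lies inside the range, the reversed path consumes all of $\lambda_0$. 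This is consistent with the local-time-filling phase of Proposition \ref{phases} for $\nu\le 0$.

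The axioms are checked as follows. Symmetry is immediate because the DPBM equation is invariant under $W\mapsto 2y-W$. For the local-time Markov property, the cleanest route is through the driver. I will argue that, conditionally on $\lambda_0$, the stretched reversed path is Markov in the pair (position, remaining capacity). Reversing at a stopping time of $\tilde W$ corresponds to cutting $W$ at a last-exit type time. Before that time, $W$ is a DPBM whose future depends on its past only through $\sup W$, $\inf W$ and its current position, and these quantities are recorded by the remaining capacity $\lambda_T$ on $I_T$ together with $X_T$. Stretching covariance and side-locality I would obtain from the local-time flow / Ray--Knight description of the DPBM in terms of Bass--Burdzy flows, in the spirit of \cite{AHS} and \cite{MR2197114}. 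Concretely, I would compute the driver of $\tilde W$ in the uniformizing stretching $\eta_{\lambda_0,y}$ and show it is a Markov diffusion with constant drift on each side of $z_0$; translation invariance of that drift gives stretching covariance. Maximality in time holds because the process exhausts all local time, so it cannot be extended.

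To identify $\alpha$, I will mimic the Beta computation. Take $b=0$ so that the target $y$ sits at the boundary, which reduces to a one-sided DPBM. I will compute the law of the fraction of $\lambda_0$ at a fixed point consumed by $\tilde W$ up to some natural stopping time, and compare it with the driver prediction $1-\exp(-\ell_\infty)$, where $\ell_\infty$ is the total local time at $0$ of a Brownian motion with drift $\alpha-1$ on the relevant side. An alternative I would try first is to compare directly the drift of the driver with the Bass--Burdzy parameters of the DPBM local-time flow. Warren's result for $\nu=0$ gives $\alpha=0$ in that case, which serves as a consistency check, and for general $\nu$ the perturbation coefficient should shift the drift by exactly $\nu$.

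The main obstacle is the local-time Markov property (equivalently, the Markov property of the driver) under time reversal. A stopping time of $\tilde W$ is not a stopping time of $W$. I plan to handle this with Williams-type path decompositions at the first hitting times of new extrema of $W$, using that the DPBM has the Markov property once the running sup and inf are included in the state. Between such times the path reversal reduces to reversing Brownian motion with drifts, which is tractable.
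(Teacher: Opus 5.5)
Your overall architecture matches the paper's. You check that the conditional laws of $\tilde W$ form a regular, time-maximal family, invoke Theorem~\ref{thm:classification}, then pin down $\alpha$ by a Beta computation. The gap is in that last step, which is the only one that identifies the answer.

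In the DPBM setup it is $W$ that starts at the target $y$ and is stopped at $\tau^b_x$. Taking $b=0$ therefore stops $W$ at its first hitting of $x$. This puts the \emph{starting point} $x$ of $\tilde W$ on $\partial I$, as the paper remarks, while $y$ stays in the interior of the range. So your configuration still has an interior target and a two-sided driver reflected at the non-target boundary. Your prediction $1-\exp(-\ell_\infty)$, with $\ell_\infty$ the local time of a drifted motion, is only valid for a boundary target, so it does not apply here. You also never specify a stopping time of $\tilde W$ at which the consumed fraction has a computable law. On the DPBM side, a boundary target requires the singular conditioning that $W$ never returns to $y$.

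The missing idea is the paper's limiting configuration with $x=y$ on the boundary. There $W$ is a $\nu$-PRBM excursion from $y$ down to $-\infty$, followed by an independent Brownian excursion from $-\infty$ back to $y$. The local times at $0$ of the two pieces are independent $\Gamma(1-\nu,1/2)$ and $\Gamma(1,1/2)$ variables. Hence the fraction of $\lambda_0(0)$ eaten by $\tilde W$ during its first piece (the reversed Brownian excursion, up to reaching $-\infty$) is $\text{Beta}(1,1-\nu)$. Your driver computation, now legitimately run for the burglar started from and targeting $+\infty$, gives $\text{Beta}(1,1-\alpha)$ at the time the driver first reaches $-\infty$. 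That time is exactly $t\to\infty$ in the quadratic-variation parametrization, so your $\ell_\infty$ is the right object once the configuration is fixed. Your fallbacks do not close the gap: Warren's case only fixes $\alpha$ at $\nu=0$, and ``the perturbation should shift the drift by exactly $\nu$'' is a guess.

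A second, smaller issue is stretching covariance. Your concrete plan is to compute the driver of $\tilde W$ in $\eta_{\lambda_0,y}$ and show that, conditionally on $\lambda_0$, it is a diffusion with constant drift on each side of $z_0$. By Proposition~\ref{characterization} that is already the whole proposition, so it cannot serve as an input to Theorem~\ref{thm:classification}, and would make it superfluous. The paper instead imports stretching covariance for the PRBM conditioned on its local times from \cite{AHS} (their burglar $Z^{(1)}$). It then glues the two sides using the independence of the DPBM's excursions above and below $y$.

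Your Williams-decomposition plan for the local-time Markov property is reasonable. It is a more detailed version of the paper's appeal to rewirings and the Markov property of $(W_t,L_W(t,\cdot))$.
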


  \begin{proof}
    We will follow a very similar path as for the positive case, and thus give less details. Let us verify the hypotheses of Theorem~\ref{thm:classification}.

    The Ray-Knight descriptions of the local times of PRBMs (see \cite{MR2454984}) show that, as a squared Bessel process, $\lambda_0$ is an admissible capacity. The process $\tilde W$ is admissible for $\lambda_0$ at the point $y$.
    
    Symmetry is immediate by reflecting the DPBM and its local times. Similarly as in Theorem \ref{thm:disintegration}, side-locality follows from usual Brownian rewirings and independence of excursions that can be deduced from the works \cite{MR1193919,MR1655296,MR1730618}.
    The local-time Markov property also follows from the same kind of rewirings, and the fact that the couple $(W_t, L_W(t,\cdot))$ is Markovian. 
    
    Stretching covariance of the DPBM conditionally on its local time follows from the stretching covariance of the PRBM shown in \cite{AHS} (for their burglar $Z^{(1)}$), and the fact that the excursions above and below level $y$ are independent.

    At final time $T$, $\lambda_T=L_W(0,\cdot)=0$, so no admissible continuation is possible and the family is indeed maximal in time. Theorem~\ref{thm:classification} therefore identifies the family of laws of the DPBM conditionally on $\lambda_0$ with the burglar family of some parameter $\alpha\in\R$.
    
    It remains to identify $\alpha$, which can also be done in a similar way as before. Let us e.g.\ consider the limitting case of $x=y=1$ being a boundary point (also described in more details later in the Subsection). In this case, $W$ can be seen for instance as an excursion of a $\nu$-PRBM from $y>0$ to $-\infty$ (staying in $(-\infty,y)$), followed by an independent Brownian excursion from $-\infty$ to $y$ (also staying in $(-\infty,y)$). Thus, at time $\tau_{-\infty}$ when $W$ reaches $-\infty$, the fraction of the total local time already exhausted at the point 0 by $W$ is
    $$\frac{L_W(\tau_{-\infty},0)}{\lambda_0(0)} = \frac{L_W(\tau_{-\infty},0)}{L_W(\tau_{-\infty},0)+ (\lambda_0(0)-L_W(\tau_{-\infty},0))}\sim \frac{\Gamma(1-\nu,1/2)}{\Gamma(1-\nu,1/2)+ \Gamma'(1,1/2)},$$
    which has law $\text{Beta}(1-\nu,1)$, and for the time-reversed process $\tilde W$, the corresponding consumed fraction is therefore $\text{Beta}(1,1-\nu)$ ; while looking at the driver and using Lemma \ref{lemmatech} as previously delivers us that the local time at 0 of a burglar of negative parameter $\alpha\leq 0$ on the constant capacity 1, started at $+\infty$ and targeting $+\infty$ is a $\text{Beta}(1,1-\alpha)$, finally allowing us to identify $\alpha=\nu$.
    \end{proof}

This disintegration also allows us to understand the last case of the phases of the burglars, indeed for all $\nu\leq 0$, the burglars always fill all the available local-time since the corresponding conditioned DPBM amount for all the local time of $\lambda_0$.

\ms

Moreover, similarly as for nonnegative intensity $\nu\geq 0$, by setting $b=0$, we get the disintegration in the boundary starting point case $x\in \partial I$. It is this time also possible to make sense of a boundary target $y\in\partial I$ via the same disintegration, by formally conditioning $\lambda_0$ to vanish at $y$, namely by taking $W$ a $\nu$-perturbed Brownian motion started at $y$ and conditioned to reach $x$ without coming back to $y$ (this can be done in a similar way as for Brownian motion, see \cite{MR1655297}). 

In particular, let $x<y$, $W$ be a $\nu$-perturbed Brownian excursion from $y$ to $x$ (staying in $(x,y)$), and let $\lambda_0$ be the total occupation time of $W$. Then the burglar of intensity $\nu$, initial capacity $\lambda_0$, started from $x$ and targeting $y$ (in its quadratic variation time-parametrization) has exactly the law of the time-reversed excursion $\tilde W$ conditionally on $\lambda_0$. This has particular interest, since by \cite{Titus_Lupu_2018}, a $\nu$-PRBM is exactly the concatenation of all the loops of a Brownian loop-soup of intensity $(1-\nu)\ge 1$, ordered by their minimum.

Thus, another way to obtain this burglar is the following : let $\nu\leq 0$, $x,y \in\R$, $\mathcal L$ be a loop-soup of intensity $(1-\nu)\geq 1$ in $(x,y)$, and $\lambda_0$ its occupation time. Then the burglar of intensity $\nu$, initial occupation profile $\lambda_0$, started from $x$ and targeting $y$ has exactly the law of the discovery of all the loops in $\mathcal L$, by increasing order of their minima in $(x,y)$, conditionally on the total occupation time $\lambda_0$.

Finally, if we also add to $\mathcal L$ an independent Brownian excursion from level $y$ to $x$ (staying in $(x,y)$), then the burglar of intensity $\nu$, initial occupation profile $\lambda_0$ (the total occupation time of both the loop-soup and the excursion), started from $y$ and targeting $y$ has exactly the law of the concatenation of first the Brownian excursion from $y$ to $x$, and then the discovery of all the loops in $\mathcal L$, conditionally on $\lambda_0$. This echoes the duality between burglars of intensity $\nu$ and $(1-\nu)$ highlighted in Subsection \ref{sidelocality}.

For $\nu=-\delta/2\in(-1,0)$, these exact burglars should also correspond to the local-time-filling spindle exploration processes recently studied in \cite{aïdékon2025stochasticflowsmarkedstable}. These explorations fill local times which are squared Bessel processes of negative dimension $-\delta$, see their Section 3 for more details.

\ms

This is all also related to the decomposition given in \cite{pitman2018squaredbesselprocessespositive}, where they decompose a Brownian path into excursions above and below an evolving level. The concatenation of the excursions below give a PRBM (thus corresponding to our burglar), which can then be understood in some sense as a full usual Brownian motion on top of a "negative intensity loop-soup" corresponding to the excursions above that have been cut out (and indeed yield as local time a squared Bessel process of negative dimension). This is exactly the right point of view to understand why the PRBM/DPBM construction is the natural negative-parameter analogous of Subsection \ref{thm:disintegration}.

\begin{remark}
  The DPBM is actually well-defined for all $\nu<1$. However, for $\nu\in(0,1)$, the corresponding time-reversed is not an admissible process for its total local time $\lambda_0$ (since for these intensities, there are exceptional points through which the DPBM goes but spends 0 local time). That is the only step at which the characterization of Theorem~\ref{thm:classification} in Proposition \ref{dpbm} fails. The burglars of parameter $\nu\in(0,1)$ are still closely related to the corresponding DPBMs. Indeed, as mentioned in the introduction, we believe that it should be straightforward to translate the construction of the space-filling SLEs for $4<\kappa<8$ into a burglar analogous, namely ``local-time filling'' burglars for $0<\nu<1$, which would then exactly correspond to the DPBM disintegration for these intensities. Another way to say this is that the intermediate-regime ($\nu\in(0,1)$) burglars (which can be understood via loop-soup disintegration, Theorem \ref{thm:disintegration}) also correspond to (time-reversed) DPBMs, restricted to the times when they are in the connected component of $x$ in $\{z\in\R, L_W(t,z)>0\}$, and conditioned on $\{\tau_x^b<+\infty\}$.
\end{remark}

\subsection{Recovering the whole loop-soup}
\label{loopsoup}

The burglars that we just defined, and in particular the loop-soup disintegration point of view for $\nu >0$, give us a way to recover the law of the loop-soup $\mathcal L_\nu$ of intensity $\nu>0$ on a bounded interval $I$ of $\R$ from its occupation time field $\Lambda$.
The algorithm is the following :
\begin{itemize}
\item Choose any dense sequence $(x_n)$ of points in $I$, and set $\Lambda^{(0)} := \Lambda$.
\item For $n \geq 0$, run $(X^n_t)_{0\leq t \leq T_n}$ a burglar of parameter $\nu$ started from $x_n$ and targeting $x_n$, with initial capacity $\Lambda^{(n)}$.
\item Set $\Lambda^{(n+1)} := \Lambda^{(n)} - L_{X^n}(T_n,\cdot)$. Then go back to the previous step.
\end{itemize}

As $n\to \infty$, this procedure creates burglars that will exhaust all the available local time $\Lambda$ in the interval $I$. It only remains to cut their paths into separate loops to recover a loop-soup. This cutting must be done at the point $x_n$ for the path $X^n$, via the appropriate cutting procedure, analogous to the stick-breaking algorithm at critical intensity. The path $X^n$ spends local time $\Lambda^{(n)}(x_n)$ at the point $x_n$, we index the excursions of $X^n$ from $x_n$ by the local time at $x_n$ at which they appear. The path $X^n$, seen as a concatenation of excursions is then cut into an infinite number of loops. The law of the proportion of the local time of the first loop we discover is $\text{Beta}(1,\nu)$ -- see \cite{LeJan2011,Titus_Lupu_2018}. The first loop has local time $\Lambda^{(n)}(x_n) Y_1$, the second one $\Lambda^{(n)}(x_n) (1-Y_1)Y_2$, and so on, with $Y_i\sim \text{Beta}(1,\nu)$ i.i.d.\ -- see also \cite{werner2015spatialmarkovpropertysoups,werner2025switchingidentitycablegraphloop} for an explanation of the stick-breaking algorithm for discrete loops, and at critical intensity for Brownian loops.
The collection $\mathcal L$ of loops obtained in this way then has the law of the loop-soup $\mathcal L_\nu$ conditionally on its occupation time field $\Lambda$. 

\ms

Finally, one last thing worth mentioning is that via this disintegration, properties proven for the burglars can actually be passed to the whole loop-soup, in particular, on the line, the Brownian loop-soups of any intensity $\nu>0$ are stretching covariant. Indeed, for any $\nu>0$ and any admissible capacity $\lambda$ we can make sense of the conditional law of the Brownian loop-soup $\mathcal L_\nu(\lambda)$ of intensity $\nu$ conditioned to have occupation time profile $\lambda$ ; and then for $g$ a stretching morphism from $\lambda$ to $\rho$, the stretched collection $g(\mathcal L_\nu(\lambda)) := \{g(L), L\in \mathcal L_\nu(\lambda)\}$ has the same law as $\mathcal L_\nu(\rho)$.

\subsection{Flow of local times}

Let us now conclude with a quick discussion about the local time flows of the burglars explaining how the results of \cite{AHS} naturally extend to negative intensities and general targets.

The disintegration point of view for both $\nu\geq 0$ and $\nu \leq 0$ leads to a general unified description of the local time flow of the burglar $X$ of parameter $\nu\in\R$ on an admissible capacity $\lambda$, started from $x\in\overline I$ and targeting $y\in\overline I$. For some fixed $a\in I$, let $\eta:z\in I\mapsto \int_a^z\frac{\dd r}{\lambda(r)}$. In our vocabulary, it is a stretching morphism uniformizing capacity $\lambda$ to the constant capacity 1.

Then, for all $\nu\in\R$, the local time flow of the stretched process $\eta(X)$ (defined up to time parametrization) is embedded inside a $\Jac(2\,|_{\eta(x)}\,0,\, 2-2\nu\,|_{\eta(y)}\, 2\nu)$ flow -- again, see \cite{AHS} for definitions and properties. More precisely, the local time flow of $\eta(X)$ has the law of a Jacobi flow $$Y\sim\Jac(2\,|_{\eta(x)}\,0,\, 2-2\nu\,|_{\eta(y)}\, 2\nu)$$ under its ``primitive Eve'' 
$$ \Xi_r := \inf \{ v \in [0, 1] : Y_{r, \eta(y)}(v) = 1\},$$
i.e.\ for $L$ the local time process of $\eta(X)$, $\tau_b^r = \inf\{t, L(t,r)>b\}$ and $\mathcal S_{r,z}(b) = L(\tau_b^r,z)$, then 
$$ \bigl(\mathcal S_{r,z}(b) \bigr)_{r,z\in \R,\,0\leq b\leq L(\infty,r)} \sim  \bigl(Y_{r,z}(b) \bigr)_{r,z\in \R,\,0\leq b\leq \Xi_r}. $$
Notice that for $\nu\leq 0$, since the process fills all the available local time, the primitive Eve is trivial $\Xi_r=1$ for all $r\in\R$ and the local time flow of our process is the whole Jacobi flow.

\bs

\subsection*{Acknowledgements}
I thank Wendelin Werner for help and advice during the preparation of this paper. I also thank \'Elie Aïdékon for enlightening discussions, and useful feedback on a first version of this paper. This research has been funded by a grant from the Royal Society.

\end{document}